\renewcommand\neq{\mathrel{\vphantom{|}\mathpalette\xsneq\relax}}
\newcommand\xsneq[2]{%
  \ooalign{\hidewidth$#1|$\hidewidth\cr$#1=$\cr}%
}
\numberwithin{equation}{section}
\newtheorem{theorem}{Theorem}[section]
\newtheorem{lemma}{Lemma}[section]
\theoremstyle{plain}
\newtheorem{definition}[theorem]{Definition}
\newtheorem{example}{Example}[section]
\begin{document}
\title{\textbf{A numerical technique for solving multi-dimensional fractional optimal control problems using fractional wavelet method} }
\author{  S. Saha Ray, Akanksha Singh*\\
\textit{National Institute of Technology}\\
\textit{Department of Mathematics}\\
\textit{Rourkela-769008, India}\\
\textit{*19singhac@gmail.com}}

\maketitle

\begin{abstract} 
This paper presents an efficient numerical method for solving fractional optimal control problems using an operational matrix for a fractional wavelet. Using well-known formulae such as Caputo and Riemann-Liouville operators to determine fractional derivatives and integral fractional wavelets, operational matrices were devised and utilised to solve fractional optimal control problems. The proposed method reduced the fractional optimal control problems into a system of algebraic equations. To validate the effectiveness of the presented numerical approach, some illustrative problems were solved using fractional Taylor and Taylor wavelets, and the approximate cost function value derived by approximating state and control functions was compared. In addition, convergence rate and error bound of the proposed method have been derived.
\\
\\
\textbf{Keywords:}   Fractional optimal control problem,  fractional Taylor wavelet, Caputo derivative, Taylor wavelet,  Riemann–Liouville fractional integration, wavelet basis, operational matrix.   
\\
\textbf{Mathematics Subject Classification:}  49J15, 49N10, 65T60, 26A33.\\ 
\textbf{PACS Numbers:} 43.60.Hj, 02.30.Rz .\\

\end{abstract}
\numberwithin{equation}{section}
\section{Introduction}
Fractional calculus renders excellent models for real-life problems, and it has useful applications in many fields of science and engineering \cite{herrmann2013infrared, oldham1974fractional, sun2018new, machado2011recent, furati2021fractional,ray2015fractional,naik2020global}. Optimal control is the process of determining control and state functions for a dynamic system over a period of time to minimize or maximize a cost function. Despite the fact that optimal control theory has been studied for many years, fractional optimal control theory is a relatively new branch of mathematics. Various definitions of fractional derivatives can be applied to define a fractional optimal control problem (FOCP). Various numerical and analytical tools have been used in recent years to solve different kinds of FOCPs \cite{sahu2018comparison,agrawal1989general,agrawal2004general,
heydari2016wavelets,hosseinpour2019muntz,yousefi2011use,
dehestani2022spectral,barikbin2020solving,bhrawy2017solving}. Wavelets are extremely effective methods that are applied in a variety of numerical techniques. Most applications of the wavelet concept are found in applied science  and engineering. Wavelets have been found to be effective in a variety of applications, and they are particularly useful in signal processing \cite{behera2022efficient,saha2021new,behera2022wavelet,
behera2020operational,sahu2016legendre}
.

The operational matrix of fractional integration, the fractional Taylor wavelet basis, function approximation, and the Lagrange multiplier approach have been used in this study to solve a specific FOCP. It has been written as follows:

\begin{equation}
\begin{split}
\min\Tilde{{J}} & =  \dfrac{1}{2} \int_{{\zeta}_{0}}^{{\zeta}_{f}}(\Tilde{p}({\zeta})x^2({\zeta})+\Tilde{q}({\zeta})u^2({\zeta}))d{\zeta} \\
&= \int_{{\zeta}_{0}}^{{\zeta}_{f}} \mathfrak{F}({\zeta}, x({\zeta}), u({\zeta})) d {\zeta},
\end{split}\label{1.1}
\end{equation}
\begin{equation}
    {_{{\zeta}_{0}}^{C}}\mathcal{D}_{{\zeta}}^{\mu}x({\zeta})=\Tilde{a}({\zeta})x({\zeta})+\Tilde{b}({\zeta})u({\zeta}),\label{1.2}
\end{equation}
\begin{equation}
    x({\zeta}_{0})=x_{0}.\label{1.3}
\end{equation}
where $\Tilde{p}({\zeta})\geq{0}, \Tilde{q}({\zeta})>0, \Tilde{b}({\zeta})\neq{0}, {\zeta}_{0} = 0, {\zeta}_{f} =1 $.\\\\
With this proposed, the original optimal control problem has been converted into a set of linear equations. The fractional derivative of state function and control function have been expanded using the fractional Taylor wavelet basis . Then, the fractional integration and product operational matrices are used to obtain a linear system of algebraic equations from the cost function given in Eq. \eqref{1.1} and a dynamical system given by Eq. \eqref{1.2} in terms of the unknown coefficients. Finally, the constrained extremum technique is applied, which usually involves connecting the constraint equation derived from the given dynamical system to the cost function using a set of unknown Lagrange multipliers. Furthermore, the error estimation and convergence analysis of the proposed numerical technique have been established.
\\

The contents of this paper are as follows: Section 2 discusses the definitions of fractional derivatives and integrals \cite{li2011riemann,caputo1967linear}. Wavelets  are discussed in Section 3. Section 4 describes function approximation. Section 5 illustrates the operational matrix of integration for the fractional Taylor wavelet\cite{keshavarz2018taylor}. In Section 6, a numerical approach has been proposed for solving the FOCP. Section 7 explains the estimation of the error. In Section 8, the convergence of the proposed fractional Taylor wavelets method has been established . In Section 9, numerical experimental results show the accuracy and efficiency of the proposed numerical scheme. Finally, Section 10 comes to an end with a concluding remark.

\section{ Preliminaries }

In this section, some important definitions are provided.
\begin{definition}[Riemann-Liouville integral]
The Riemann-Liouville fractional integral of a function $\Tilde{\mathcal{Z}}( \sigma)$ of order $\mu > 0$ is defined  \cite{li2011riemann} as follows:
\begin{equation}
 {_0}{}\mathcal{I}_{{\sigma}}^{\mu} \Tilde{\mathcal{Z}}( \sigma) =\dfrac{1}{\Gamma(\mu)}  \int_{0}^{\sigma} (\sigma-\varpi)^{\mu-1}\Tilde{\mathcal{Z}}(\varpi)d\varpi, \ \ \ \ \     \mu> 0, \sigma > 0. \label{2.1}
\end{equation}

\end{definition}

\begin{definition}[Caputo fractional derivative ]
Caputo fractional derivative was introduced by M. Caputo in 1967. The Caputo fractional derivative of a function $\Tilde{\mathcal{Z}}({\sigma})$ of order $\mu$ is defined \cite{caputo1967linear} as follows:

\begin{equation}
{_{\sigma_{0}}^C}\mathcal{D}_{\sigma}^{\mu}\Tilde{\mathcal{Z}}(\sigma)=
\begin{cases} 
\dfrac{1}{\Gamma(1-\mu)}\displaystyle\int_{\sigma_{0}}^{\sigma}{(\sigma-\varpi)^{-\mu}}\frac{d\Tilde{\mathcal{Z}}(\varpi)}{d\varpi}d\varpi, \ \ & 0<\mu<1, \\  
\mathcal{Z}'(\sigma), & \mu=1.
\end{cases}\label{2.2} 
\end{equation}
\end{definition}

\section{Wavelets}
This section gives an overview of Taylor wavelets and fractional Taylor wavelets.
\subsection{Taylor wavelets}
Let ${{k}} $ be a positive integer. For each ${{n}} =1,2,  \dots  , 2^{{{k}}-1}$ and non-negative integer $ {{m}}=0,1,2, \dots,{M}-1$ where $ {m} $ is the order of Taylor polynomial, then  the Taylor wavelet $\psi_{{{n}}, {{m}}}({\zeta})$ is defined on the interval $[0, 1)$ as 
\begin{equation}
\psi_{{{n}}, {{m}}}(\varsigma)=
\begin{cases}
2^{\frac{{{k}}-1}{2}}\Tilde{\mathcal{T}}_{{{m}}}(2^{{{k}}-1}\varsigma-{{n}}+1),  \quad if \quad \dfrac{{{n}}-1}{2^{{{k}}-1}}\le \varsigma <\dfrac{{{n}}}{2^{{{k}}-1}},\\
0, \quad \quad \quad \quad \quad \quad \quad \quad \quad \quad \quad   \quad otherwise,\\
\end{cases}\label{3.1}
\end{equation}
where $ \Tilde{\mathcal{T}}_{{{m}}}(\varsigma)=\sqrt{2{{m}}+1}\times \varsigma^{{{m}}}$ is the normalized Taylor polynomial of degree ${{m}}$.
\subsection{Fractional order Taylor wavelets}
By applying the transformation $\varsigma={\zeta}^\mu$ to Eq. \eqref{3.1} for a positive real number $\mu$, the fractional-order Taylor wavelet is defined \cite{yuttanan2021numerical} as follows:
\begin{equation}
\psi_{{{n}}, {{m}}}^{\mu} ({\zeta})=
\begin{cases}
2^{\frac{{{k}}-1}{2}}\Tilde{\mathcal{T}}_{{{m}}}(2^{{{k}}-1}{\zeta}^{\mu}-{{n}}+1),  \quad if \quad \Big(\dfrac{{{n}}-1}{2^{{{k}}-1}}\Big)^\frac{1}{\mu}\le {\zeta} <\Big(\dfrac{{{n}}}{2^{{{k}}-1}}\Big)^\frac{1}{\mu},\\
0, \quad \quad \quad \quad \quad \quad \quad \quad \quad \quad \quad \quad \quad otherwise.\\
\end{cases}\label{3.2}
\end{equation}
\section{Function approximation}
A function $f({\zeta})$, square integrable in $[0,1]$,  can be expressed in terms of the wavelets as
\begin{equation}
f({\zeta})\approx\sum_{{{n}}=1}^{2^{{{k}}-1}}\sum_{{{m}}=0}^{{M}-1}\Hat{c}_{{{n}}, {{m}}}\psi_{{{n}}, {{m}}}^{\mu}({\zeta})=\Hat{C}^{{T}}\Hat{\Psi}^{\mu}({\zeta}), \label{4.1}
\end{equation}
where $\Hat{C}$ and $\Hat{\Psi}^{\mu}({\zeta})$ are column vectors of dimension $(2^{{{k}}-1}{M}\times1)$ given by
\begin{equation}
\Hat{C}^{T}=[\Hat{c}_{1, 0}, \Hat{c}_{1, 1}, \Hat{c}_{1, 2},  \dots \Hat{c}_{1, {M}-1},  \dots , \Hat{c}_{2^{{{k}}-1}, {M}-1}], \label{4.2}
\end{equation}
and
\begin{align}\nonumber
 \Hat{\Psi}^{\mu}({\zeta}) &=[\psi_{1, 0}^{\mu}({\zeta}), \psi_{1, 1}^{\mu}({\zeta}), \psi_{1, 2}^{\mu}({\zeta}), \dots , \psi_{1, {M}-1}^{\mu}({\zeta}), \dots , \psi_{2^{{{k}}-1}, {M}-1}^{\mu}({\zeta})]^T \\
&=[\psi_{1}^{\mu}({\zeta}), \psi_{ 2}^{\mu}({\zeta}), \psi_{3}^{\mu}({\zeta}), \dots ,  \psi_{\Hat{{m}}}^{\mu}({\zeta})]^T,\label{4.3}
\end{align} 
where $\Hat{{m}}=2^{{{k}}-1}{M}.$\\
The coefficient vector $\Hat{C}$ can be obtained from Eq. \eqref{4.1} as follows
\begin{equation}
\Hat{C}^{T}=\Bigg(\int_0^1f({\zeta})\left(\Hat{\Psi}^{\mu}({\zeta})\right)^Td{\zeta}\Bigg) \Hat{D}^{-1}(\mu),\label{4.4}
\end{equation}
where $\Hat{D}(\mu)$ is a square matrix of order $2^{{{k}}-1}{M}$ and is given by
\begin{equation}
\Hat{D}(\mu)=\int_0^1 \Hat{\Psi}^{\mu}({\zeta})(\Hat{\Psi}^{\mu}({\zeta}))^Td{\zeta}. \label{4.5}
\end{equation}

When ${{k}} = 2$ , ${M} = 4$, fractional Taylor wavelets for $\mu=0.9$ 
$$
\Hat{D}(0.9)=
\begin{bmatrix}
0.925875 & 0.844033 &	0.7394 & 0.662063 & 0 & 0 & 0 & 0\\
0.844033 &	0.992009 &	0.969161 & 0.922368 & 0 & 0 & 0 & 0\\
0.7394 &	0.969161 &	1.00639	& 0.995918 &  0 & 0 & 0 & 0\\
0.662063 &	0.922368 &	0.995918 &	1.01268	 &  0 & 0 & 0 & 0\\
 0 & 0 & 0 & 0 &	1.07413 & 0.941951 &	0.815443 &	0.72606\\
 0 & 0 & 0 & 0 &	0.941951 &	1.09403	& 1.06284 &	1.00824\\
 0 & 0 & 0 & 0 &	0.815443 &	1.06284 &	1.10008	 & 1.08633\\
 0 & 0 & 0 & 0 & 0.72606	& 1.00824 &	1.08633 &	1.10297\\
\end{bmatrix}.
\quad
$$
\\
When $\mu=1 $,  fractional Taylor wavelets change into Taylor wavelets
$$
\Hat{D}(1)=
\begin{bmatrix}
1	& 0.866025 &	0.745356 &	0.661438  &	 0 & 0 & 0 & 0\\
0.866025 &	1 &	0.968246 &	0.916515  &	 0 & 0 & 0 & 0\\
0.745356 &	0.968246 &	1	&0.986013 &	 0 & 0 & 0 & 0\\
0.661438 &	0.916515 &	0.986013 &	1 &	0. &	0.	& 0. &	0.\\
 0 & 0 & 0 & 0 &1	& 0.866025 &	0.745356 &	0.6614380\\
 0 & 0 & 0 & 0 	&.0.866025 &	1. &	0.968246 &	0.916515\\
 0 & 0 & 0 & 0 	&	0.745356 &	0.968246 &	1	&0.986013\\
 0 & 0 & 0 & 0 	& 0.661438 &	0.916515 &	0.986013 &	1\\

\\
\end{bmatrix}.
\quad
$$
\section{Fractional Taylor wavelet operational matrix}
In the present analysis, the operational matrix for first-order integral has been derived. 
\begin{theorem}
Let $ \Hat{\Psi}^{\mu}({\zeta})$ be the fractional Taylor wavelets vector introduce in Eq. \eqref{4.3}. Then the first order integral operational matrix is given by
$$\int_{0}^{{\zeta}}\Hat{\Psi}^{\mu}({\zeta})d{\zeta}\approx \mathcal{P}^1\Hat{\Psi}^{\mu}({\zeta}).$$
Also, the n,m th element of first order integral of vector $\Hat{\Psi}^{\mu}({\zeta})$ is given by
$$\int_{0}^{{\zeta}} \psi_{{n},{m}}^{\mu}({\zeta}) d{\zeta} \approx \sum_{r=1}^{2^{{{k}}-1}}\sum_{l=0}^{{M}-1}\Theta_{r l}^{\mu; {n} {{m}}}\psi_{{r}, {l}}^{\mu}({\zeta}), ~ ~ ~ ~ ~ ~ ~ ~ ~n=1,2,\dots,2^{{k}-1},\quad{{m}} = 0, 1, 2,  \dots , {M}-1$$
where
\begin{equation*}
\Theta_{r l}^{\mu; {n} {{m}}}=
\begin{cases}
 2^{\frac{{{k}}-1}{2}}\sqrt{2{{m}}+1} (2^{{{k}}-1})^{{{m}}} c_{rl}^{1{{m}}}, \quad  \quad \quad \quad \quad \quad \quad \quad \quad \quad \quad \quad \quad n=1,\\
 2^{\frac{{{k}}-1}{2}} \sqrt{2{{m}}+1}\displaystyle{\sum_{s=0}^{{{m}}}}2^{({{k}}-1)s}(-1)^{{{m}}-s}({{n}}-1)^{{{m}}-s}\binom{{{m}}}{s}c_{{r}{l}}^{{{n}}s}, \quad \quad n\neq 1.
\end{cases}
\end{equation*}
\end{theorem}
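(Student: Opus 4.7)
The plan is to carry out a case analysis on the index $n$, in each case explicitly computing $\int_0^\zeta \psi_{n,m}^\mu(\tau)\,d\tau$ from the defining formula \eqref{3.2}, and then expanding the resulting function of $\zeta$ in the fractional Taylor wavelet basis via \eqref{4.1}--\eqref{4.4} to identify the coefficients $\Theta_{rl}^{\mu;nm}$.

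First I would fix indices $n\in\{1,\dots,2^{k-1}\}$ and $m\in\{0,\dots,M-1\}$ and split the integration in three regimes corresponding to $\zeta$ lying to the left of, inside, or to the right of the support $\bigl[\bigl((n-1)/2^{k-1}\bigr)^{1/\mu},\ \bigl(n/2^{k-1}\bigr)^{1/\mu}\bigr)$ of $\psi_{n,m}^\mu$. On the first regime the integral is zero; on the third it equals a constant (the full integral of $\psi_{n,m}^\mu$ over its support); only the middle regime contributes a genuine function of $\zeta$.

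For the base case $n=1$, the argument $2^{k-1}\zeta^\mu-n+1$ collapses to $2^{k-1}\zeta^\mu$, so $\psi_{1,m}^\mu(\tau)=2^{(k-1)/2}\sqrt{2m+1}(2^{k-1})^m\tau^{m\mu}$ on its support. Direct integration produces a scalar multiple of $\zeta^{m\mu+1}/(m\mu+1)$ inside the support and a constant beyond it, and expanding this piecewise function in the wavelet basis with coefficients denoted $c_{rl}^{1m}$ lets the overall factor $2^{(k-1)/2}\sqrt{2m+1}(2^{k-1})^m$ pull out, giving the first formula for $\Theta_{rl}^{\mu;nm}$. For $n\neq 1$ I would instead apply the binomial theorem,
\begin{equation*}
(2^{k-1}\tau^\mu-(n-1))^m=\sum_{s=0}^m \binom{m}{s}(2^{k-1})^s(-1)^{m-s}(n-1)^{m-s}\tau^{s\mu},
\end{equation*}
which reduces the computation to $m+1$ integrals of the form $\int\tau^{s\mu}\,d\tau$. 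Expanding each resulting piecewise antiderivative in the wavelet basis and calling the internal coefficients $c_{rl}^{ns}$, collecting the binomial factors outside the sum over $(r,l)$ reproduces exactly the second stated formula. The symbol ``$\approx$'' reflects the truncation of the wavelet expansion to the finite basis $\{\psi_{r,l}^\mu:\ 1\le r\le 2^{k-1},\ 0\le l\le M-1\}$.

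The main obstacle is the bookkeeping induced by the piecewise antiderivative: after integration the function to be expanded is supported on the union of the interval of $\psi_{n,m}^\mu$ and everything to its right, and the coefficients $c_{rl}^{ns}$ must be defined uniformly so that they do not depend on the outer index $m$. This clean separation between the binomial factors (depending on $n,m,s$) and the wavelet coefficients $c_{rl}^{ns}$ (depending on $r,l,n,s$) is what makes the final expression tractable; once it is in place, matching coefficients in the wavelet expansion delivers both formulas for $\Theta_{rl}^{\mu;nm}$ simultaneously.
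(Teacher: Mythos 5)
Your proposal follows essentially the same route as the paper: apply the binomial expansion to $(2^{k-1}\zeta^\mu-n+1)^m$ (trivially for $n=1$), reduce to integrals of $\zeta^{\mu s}$ times the characteristic function of the support, expand each resulting antiderivative $g_{ns}(\zeta)$ in the truncated wavelet basis via the projection \eqref{4.4} to define $c_{rl}^{ns}$, and collect the normalization and binomial factors to read off $\Theta_{rl}^{\mu;nm}$. Your explicit three-regime description of the piecewise antiderivative is just an unpacking of what the paper writes compactly as $\mathcal{I}\left(\zeta^{\mu s}\chi(\zeta)\right)$, so the two arguments coincide.
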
 
\begin{proof}
The fractional  integration of the fractional Taylor wavelets vector $\Hat{\Psi}^{\mu}({\zeta})$ can be determined as
\begin{equation}
\mathcal{I}^\mu\Hat{\Psi}^{\mu}({\zeta})\approx \mathcal{P}^\mu\Hat{\Psi}^{\mu}({\zeta}),\label{5.1}
\end{equation}
where $\mathcal{P}^\mu$ is operational matrix of order $2^{{{k}}-1}{M}$.\\
Whence the first-order integration of the fractional Taylor wavelets vector $\Hat{\Psi}^{\mu}({\zeta})$ can be determined as
\begin{equation}
\int_{0}^{{\zeta}}\Hat{\Psi}^{\mu}({\zeta})d{\zeta} \approx \mathcal{P}^{1}\Hat{\Psi}^{\mu}({\zeta}).\label{5.2}
\end{equation}
Fractional Taylor wavelets can be written as follows:
\begin{align}
\psi_{{{n}}, {{m}}}^{\mu} ({\zeta}) = 2^{\frac{{{k}}-1}{2}}\Tilde{\mathcal{T}}_{{{m}}}\left(2^{{{k}}-1}{\zeta}^{\mu}-{{n}}+1\right)\chi({\zeta})\big\vert_{{\zeta}\in \left[\left(\frac{{{n}}-1}{2^{{{k}}-1}}\right)^\frac{1}{\mu}, \left(\frac{{{n}}}{2^{{{k}}-1}}\right)^\frac{1}{\mu}\right)},\label{5.3}
\end{align}  
where ~ $\chi({\zeta})\big\vert_{{\zeta}\in \left[\left(\frac{{{n}}-1}{2^{{{k}}-1}}\right)^\frac{1}{\mu}, \left(\frac{{{n}}}{2^{{{k}}-1}}\right)^\frac{1}{\mu}\right)}$  is characteristic  function defined as follows:
\begin{equation}
\chi({\zeta})\big\vert_{{\zeta}\in \left[\left(\frac{{{n}}-1}{2^{{{k}}-1}}\right)^\frac{1}{\mu}, \left(\frac{{{n}}}{2^{{{k}}-1}}\right)^\frac{1}{\mu}\right)}=
\begin{cases}
1, ~ ~ ~ ~ ~ ~{\zeta}\in \left[\left(\frac{{{n}}-1}{2^{{{k}}-1}}\right)^\frac{1}{\mu}, \left(\frac{{{n}}}{2^{{{k}}-1}}\right)^\frac{1}{\mu}\right),\\
0, ~ ~ ~ ~ ~ ~ ~ ~ ~ ~ otherwise.\label{5.4}
\end{cases}  
\end{equation}
Then
\begin{align}
\psi_{{{n}}, {{m}}}^{\mu} ({\zeta})=2^{\frac{{{k}}-1}{2}}\sqrt{2{{m}}+1}\left(2^{{{k}}-1}{\zeta}^{\mu}-{{n}}+1\right)^{{{m}}}\chi({\zeta})\big\vert_{{\zeta}\in \left[\left(\frac{{{n}}-1}{2^{{{k}}-1}}\right)^\frac{1}{\mu}, \left(\frac{{{n}}}{2^{{{k}}-1}}\right)^\frac{1}{\mu}\right)},\label{5.5}
\end{align}
\begin{align}\nonumber
\psi_{{{n}}, {{m}}}^{\mu} ({\zeta}) = & 2^{\frac{{{k}}-1}{2}} \sqrt{2{{m}}+1}\sum_{s=0}^{{{m}}}\binom{{{m}}}{s}\left(2^{{{k}}-1}{\zeta}^{\mu}\right)^s(-1)^{{{m}}-s}({{n}}-1)^{{{m}}-s}\\
&\times \chi({\zeta})\big\vert_{{\zeta}\in \left[\left(\frac{{{n}}-1}{2^{{{k}}-1}}\right)^\frac{1}{\mu}, \left(\frac{{{n}}}{2^{{{k}}-1}}\right)^\frac{1}{\mu}\right)},\label{5.6}
\end{align}
\begin{align}
\mathcal{I}\left(\psi_{{n}, {{m}}}^{\mu} ({\zeta})\right)=2^{\frac{{{k}}-1}{2}} \mathcal{I}\left(\Tilde{\mathcal{T}}_{{{m}}}\left(2^{{{k}}-1}{\zeta}^{\mu}-{{n}}+1\right)\chi({\zeta})\big\vert_{{\zeta}\in \left[\left(\frac{{{n}}-1}{2^{{{k}}-1}}\right)^\frac{1}{\mu}, \left(\frac{{{n}}}{2^{{{k}}-1}}\right)^\frac{1}{\mu}\right)}\right).\label{5.7}
\end{align}
When ${{n}}=1$
\begin{align}\nonumber
\mathcal{I}\left(\psi_{1, {{m}}}^{\mu} ({\zeta})\right) & =2^{\frac{{{k}}-1}{2}} \mathcal{I}\left(\Tilde{\mathcal{T}}_{{{m}}}\left(2^{{{k}}-1}{\zeta}^{\mu}\right)\chi({\zeta})\big\vert_{{\zeta}\in \left[\left(\frac{{{n}}-1}{2^{{{k}}-1}}\right)^\frac{1}{\mu}, \left(\frac{{{n}}}{2^{{{k}}-1}}\right)^\frac{1}{\mu}\right)}\right)\\  
&=2^{\frac{{{k}}-1}{2}}\sqrt{2{{m}}+1}\left(2^{{{k}}-1}\right)^{{{m}}} \mathcal{I}
\left({\zeta}^{\mu {{m}}}\chi({\zeta})\big\vert_{{\zeta}\in \left[\left(\frac{{{n}}-1}{2^{{{k}}-1}}\right)^\frac{1}{\mu}, \left(\frac{{{n}}}{2^{{{k}}-1}}\right)^\frac{1}{\mu}\right)}\right).\label{5.8}
\end{align}
When ${{n}} = 2, 3, 4,  \dots  ,2^{{{k}}-1}$, 
Eq. \eqref{5.7} can be written  by  using Eq. \eqref{5.6} as follows   
\begin{align}\nonumber
\mathcal{I}\left(\psi_{{n} , {{m}}}^{\mu} ({\zeta})\right)= &2^{\frac{{{k}}-1}{2}} \sqrt{2{{m}}+1}\sum_{s=0}^{{{m}}}2^{({{k}}-1)s}(-1)^{{{m}}-s}({{n}}-1)^{{{m}}-s}\binom{{{m}}}{s}\\
& \times \mathcal{I}\left({\zeta}^{\mu s}\chi({\zeta})\big\vert_{{\zeta}\in \left[\left(\frac{{{n}}-1}{2^{{{k}}-1}}\right)^\frac{1}{\mu}, \left(\frac{{{n}}}{2^{{{k}}-1}}\right)^\frac{1}{\mu}\right)}\right).\label{5.9}
\end{align}
Now, approximating $\mathcal{I}\left({\zeta}^{\mu s}\chi({\zeta})\big\vert_{{\zeta}\in \left[\left(\frac{{{n}}-1}{2^{{{k}}-1}}\right)^\frac{1}{\mu}, \left(\frac{{{n}}}{2^{{{k}}-1}}\right)^\frac{1}{\mu}\right)}\right)$ by $2^{{{k}}-1}{M}$ terms of fractional Taylor wavelets
\begin{equation}
\mathcal{I}\left({\zeta}^{\mu s}\chi({\zeta})\big\vert_{{\zeta}\in \left[\left(\frac{{{n}}-1}{2^{{{k}}-1}}\right)^\frac{1}{\mu}, \left(\frac{{{n}}}{2^{{{k}}-1}}\right)^\frac{1}{\mu}\right)}\right)=g_{{{n}}s}({\zeta})\approx \sum_{r=1}^{2^{{{k}}-1}}\sum_{l=0}^{{M}-1}c_{rl}^{{n} s}\psi_{r,l}^{\mu}({\zeta})=C_{rl}^{T}\Hat{\Psi}^{\mu}({\zeta}) ,  \label{5.10}
\end{equation}
where
\begin{align}
C_{rl}^{T} =<g_{{{n}}s}({\zeta}), \Hat{\Psi}^{\mu}({\zeta})>\Hat{D}^{-1}(\mu).\label{5.11}
\end{align}
\\
Substituting Eq. \eqref{5.10} into Eqs. \eqref{5.8} and \eqref{5.9}, this yields 
\begin{align}\nonumber
\mathcal{I}\left(\psi_{1,{{m}}}^{\mu} ({\zeta})\right) & \approx 2^{\frac{{{k}}-1}{2}}\sqrt{2{{m}}+1} \left(2^{{{k}}-1}\right  )^{{{m}}}\sum_{r=1}^{2^{{{k}}-1}}\sum_{l=0}^{{M}-1}c_{rl}^{1{{m}}}\psi_{r,l}^{\mu}({\zeta})\\
&=\sum_{r=1}^{2^{{{k}}-1}}\sum_{l=0}^{{M}-1}\Theta_{r l}^{\mu; 1 {{m}}}\psi_{{r}, {l}}^{\mu}({\zeta}), ~ ~ ~ ~ ~ ~ ~ ~ ~{{m}} = 0, 1, 2,  \dots , {M}-1,\label{5.12}
\end{align}
where
$\Theta_{r l}^{\mu; 1 {{m}}}= 2^{\frac{{{k}}-1}{2}}\sqrt{2{{m}}+1} \left(2^{{{k}}-1}\right)^{{{m}}} c_{rl}^{1{{m}}}$.\\
\begin{align}\nonumber
\mathcal{I}\left(\psi_{{{n}}, {{m}}}^{\mu} ({\zeta})\right) & \approx 2^{\frac{{{k}}-1}{2}} \sqrt{2{{m}}+1} \sum_{s=0}^{{{m}}}2^{({{k}}-1)s}(-1)^{{{m}}-s}({{n}}-1)^{{{m}}-s}\binom{{m}}{s}\sum_{r=1}^{2^{{{k}}-1}}\sum_{l=0}^{{M}-1}c_{rl}^{{{n}}s}\psi_{r,l}^{\mu}({\zeta})\\
&=\sum_{r=1}^{2^{{{k}}-1}}\sum_{l=0}^{{M}-1}\Theta_{r l}^{\mu; {{n}} {{m}}}\psi_{{r}, {l}}^{\mu}({\zeta})\quad {{n}}=2,3, \dots ,2^{{{k}}-1} ~~,
\quad {{m}} = 0, 1, 2,  \dots ,{M}-1,  \label{5.13}    
\end{align}
where~~$\Theta_{rl}^{\mu; {{n}}{{m}}}= 2^{\frac{{{k}}-1}{2}} \sqrt{2{{m}}+1}\displaystyle{\sum_{s=0}^{{{m}}}}2^{({{k}}-1)s}(-1)^{{{m}}-s}({{n}}-1)^{{{m}}-s}\binom{{{m}}}{s}c_{{r}{l}}^{{{n}}s} $.
\\  
\begin{align*}
\int_{0}^{{\zeta}} \psi_{{n},{m}}^{\mu}({\zeta}) d{\zeta} &\approx \sum_{r=1}^{2^{{{k}}-1}}\sum_{l=0}^{{M}-1}\Theta_{r l}^{\mu; {n} {{m}}}\psi_{{r}, {l}}^{\mu}({\zeta}) ~ ~ ~ ~ ~ ~ ~ ~ ~n=1,2,\dots,2^{{k}-1},\quad{{m}} = 0, 1, 2,  \dots , {M}-1\\
&= \mathcal{P}^{1}\Hat{\Psi}^{\mu}({\zeta}),
\end{align*}
where
$$ \mathcal{P}^{1} = 
\begin{bmatrix}  
\Theta_{1 0}^{\mu; 1 0} &  \dotsb  &\Theta_{1 {M}-1}^{\mu; 1 0} & \dotsb& \Theta_{2^{{{k}}-1} 0}^{\mu; 1 0}& \dotsb     &     \Theta_{2^{{{k}}-1} {M}-1}^{\mu; 1 0}\\
 \Theta_{1 0}^{\mu; 1 1} & \dotsb &\Theta_{1 {M}-1}^{\mu; 1 1} & \dotsb & \Theta_{2^{{{k}}-1}0}^{\mu; 1 1}& \dotsb    &     \Theta_{2^{{{k}}-1} {M}-1}^{\mu; 1 1}\\
  &  &         &   &  &  \\ \\
\vdots  & \dotsb  &   \vdots       & \dotsb   & \vdots  & \ddots & \vdots\\
  &  &         &   &  &  \\ \\
\Theta_{1 0}^{\mu; 2^{{{k}}-1} {M}-1} & \dotsb &\Theta_{1 {M}-1}^{\mu; 2^{{{k}}-1} {M}-1} &\dotsb & \Theta_{2^{{{k}}-1} 0}^{\mu; 2^{{{k}}-1} {M}-1}& \dotsb     &     \Theta_{2^{{{k}}-1} {M}-1}^{\mu; 2^{{{k}}-1} {M}-1}\\ 
\quad
\end{bmatrix}.
\quad
$$  
\end{proof}
\begin{landscape}
When $ {{k}} = 2, {M} = 4$ and $\mu=0.9$, operational matrix for fractional Taylor wavelet
\\
\\
\\
\\
$ 
\mathcal{P}^{0.9}=
\begin{bmatrix}
0.000376754   & 0.297652     &	0.00288863    & -0.00145978	 & 0.514575	  & -0.0901146   &	0.0680759	  & -0.0250085 \\
-0.0000339738 &	0.000166643  &	0.220955	  & 0.000139662  &	0.488206	  & -0.112764	 & 0.0940739	  & -0.0357926 \\
4.32235\times10^{-6} &	-0.000021994 &	0.000036755   &	0.168787     & 0.438811	  & -0.118706    &	0.104177      &	-0.0403629\\
-0.00681141   &	0.0723549    &	-0.24335      &	0.313394     &	0.400278      &	-0.120455    &	0.109204 7	  &-0.0428148\\
 0 & 0 & 0 & 0        &	0.00523984    & 0.389532     &	-0.0681946    &	0.025082 \\
 0 & 0 & 0 & 0    &  -0.000555412  & 0.0104491    &	0.244884      &	-0.00747049 \\
 0 & 0 & 0 & 0         &
0.000177077   &	-0.00257342  &	0.015652      &	0.172614 \\
 0 & 0 & 0 & 0       &	 -0.00604376   &	0.0694403    &	-0.242547     &	0.327425 \\
\end{bmatrix}
\quad
$ 
\\
\\
\\
\\
\\
\\
\\
\\
For Taylor wavelet
\\
\\
$
\mathcal{P}^{0.9}=
\begin{bmatrix}

0.0048894 & 0.381098 & -0.080508 &	0.0277208 &0.552325 &	-0.091867 &	0.070449	 & -0.0261748 \\
-0.000615 & 0.011247 &	0.235221	 & -0.0140564 &	0.500057 &	-0.113807 &	0.0971996 &	-0.0375117 \\
0.0003976 &	-0.005255 &	0.0257836 &	0.152538	 & 0.442586 &	-0.119822 & 	0.107959	  & -0.0424765 \\
-0.005181 & 0.0603413	& -0.214021 &	0.297002 &	0.400694 &	-0.121823 &	0.113584	 & -0.045259 \\
 0 & 0 & 0 & 0	 &	0.0048894	& 0.381098 & 	-0.080508	& 0.0277208 \\
 0 & 0 & 0 & 0 & -0.000615	 & 0.011247	& 0.235221	& -0.0140564 \\
 0 & 0 & 0 & 0 &	0.0003976 &	-0.0052557	& 0.0257836	& 0.152538 \\
 0 & 0 & 0 & 0& 	-0.005181	& 0.0603413	& -0.214021	& 0.297002 \\
\end{bmatrix}
\quad
$
\end{landscape}
\section{Numerical scheme }

Consider the problem given by Eqs. \eqref{1.1}-\eqref{1.3} and approximate the undetermined functions $x({\zeta})$  and $u({\zeta})$ by fractional Taylor wavelets functions 

\begin{equation}
   \min\Tilde{{J}}  = \dfrac{1}{2} \int_{0}^{1}(\Tilde{p}({\zeta})x^2({\zeta})+\Tilde{q}({\zeta})u^2({\zeta}))d{\zeta} ,\label{6.1}
\end{equation}

\begin{equation}
    {_{0}^C}\mathcal{D}_{{\zeta}}^{\mu}x({\zeta})= \Tilde{a}({\zeta})x({\zeta})+\Tilde{b}({\zeta})u({\zeta}),\label{6.2}
\end{equation}

\begin{equation}
    x(0)= x_{0}.\label{6.3}
\end{equation}
\\
First, we expand the fractional derivative of the state function and control function by the fractional Taylor wavelets basis $\Hat{\Psi}^{\mu}({\zeta})$.
\begin{equation}
   {_{0}^C}\mathcal{D}_{{\zeta}}^{\mu}x({\zeta})\approx \Hat{C}^{T}\Hat{\Psi}^{\mu}({\zeta}),\label{6.4}
\end{equation}
\begin{equation}
      u({\zeta}) \approx \Hat{U}^{T}\Hat{\Psi}^{\mu}({\zeta}),\label{6.5}
\end{equation}
where $\Hat{C}^{T}=[\Hat{c}_{1}, \Hat{c}_{2}, \dots , \Hat{c}_{\Hat{{m}}}],$ $
\Hat{U}^{T}=[\Hat{u}_{1}, \Hat{u}_{2}, \dots , \Hat{u}_{\Hat{{m}}}]$
are unknowns. \\

Also approximating functions $ \Tilde{a}({\zeta}), \Tilde{b}({\zeta}), \Tilde{p}({\zeta}), \Tilde{q}({\zeta}), x(0) $ by the fractional Taylor wavelets basis as
\begin{align}
     x(0)\approx d_{1}^{T}\Hat{\Psi}^{\mu}({\zeta}), ~~~~~
      {_0}{}\mathcal{I}_{{\zeta}}^{\mu}\Hat{\Psi}^{\mu}({\zeta}) \approx \mathcal{P}^{\mu}\Hat{\Psi}^{\mu}({\zeta}),\label{6.6}
\end{align}
where 
\begin{align*}
\mathcal{P}^{\mu}=\Big(\int_{0}^{1}( {_0}{}\mathcal{I}_{{\zeta}}^{\mu}\Hat{\Psi}^{\mu}({\zeta})) (\Hat{\Psi}^{\mu}({\zeta}))^{T}d{\zeta}\Big)\Hat{D}^{-1}(\mu),
\end{align*}
\begin{align}
\Tilde{a}({\zeta})\approx \Hat{A}^{T}\Hat{\Psi}^{\mu}({\zeta}),~~~~~  \Tilde{b}({\zeta})\approx \Hat{B}^{T}\Hat{\Psi}^{\mu}({\zeta}),\label{6.7}
\end{align}
\begin{align}
\Tilde{p}({\zeta})\approx \Hat{P}^{T}\Hat{\Psi}^{\mu}({\zeta}),~~~~~ \Tilde{q}({\zeta})\approx \Hat{Q}^{T}\Hat{\Psi}^{\mu}({\zeta}),\label{6.8}
\end{align}
where
\begin{equation*}
\Hat{A}^{T}=[ \Hat{a}_{1},\Hat{a}_{2},  \dots , \Hat{a}_{\Hat{{m}}}], ~~~~
\Hat{B}^{T}=[ \Hat{b}_{1},\Hat{b}_{2},  \dots , \Hat{b}_{\Hat{{m}}}],
\end{equation*}
\begin{equation*}
\Hat{P}^{T}=[ \Hat{p}_{1},\Hat{p}_{2},  \dots , \Hat{p}_{\Hat{{m}}}], ~~~~
\Hat{Q}^{T}=[ \Hat{q}_{1},\Hat{q}_{2},  \dots , \Hat{q}_{\Hat{{m}}}],
\end{equation*}

\begin{equation*}
\Hat{a}_{l}=\Big(\int_0^1 \Tilde{a}({\zeta})(\Hat{\Psi}^{\mu}({\zeta}))^{T}d{\zeta}\Big)\Hat{D}^{-1}(\mu),~~~~
 \Hat{b}_{l}=\Big(\int_0^1 \Tilde{b}({\zeta})(\Hat{\Psi}^{\mu}({\zeta}))^{T}d{\zeta}\Big)\Hat{D}^{-1}(\mu),
\end{equation*}
\begin{equation*}
\Hat{p} _{l}=\Big(\int_0^1 \Tilde{p}({\zeta})(\Hat{\Psi}^{\mu}({\zeta}))^{T}d{\zeta}\Big)\Hat{D}^{-1}(\mu),~~~~ \Hat{q}_{l}=\Big(\int_0^1 \Tilde{q}({\zeta})(\Hat{\Psi}^{\mu}({\zeta}))^{T}d{\zeta}\Big)\Hat{D}^{-1}(\mu),
\end{equation*}
for $l=1,2,\dots,\Hat{{m}}.$\\\\
Using  Riemann–Liouville fractional operational matrix of integration, $x({\zeta})$ can be represented as
\begin{equation}
   {_0}{}\mathcal{I}_{{\zeta}}^{\mu}{_{0}^C}{}\mathcal{D}_{{\zeta}}^{\mu}x({\zeta})=x({\zeta})-x(0),\label{6.9}
\end{equation}
\begin{equation}
x({\zeta}) =  {_0}{}\mathcal{I}_{{\zeta}}^{\mu}{_{0}^C}{}\mathcal{D}_{{\zeta}}^{\mu}x({\zeta}) + x(0) \approx (\Hat{C}^{T} \mathcal{P}^{\mu} + d_{1}^{T}) \Hat{\Psi}^{\mu}({\zeta}),\label{6.10}
\end{equation}
where $\mathcal{P}^\mu$ is the fractional Taylor operational matrix .\\\\
Now, approximating the $x^2(t)$ by using fractional Taylor wavelets 
\begin{align}\nonumber
x^2({\zeta}) &\approx((\Hat{C}^{T}\mathcal{P}^{\mu}+d_{1}^{T})\Hat{\Psi}^{\mu}({\zeta}))^2\\ \nonumber
&=((\Hat{C}^{T}P^{\mu}+d_{1}^{T})\Hat{\Psi}^{\mu}({\zeta}))((\Hat{C}^{T}\mathcal{P}^{\mu}+d_{1}^{T})\Hat{\Psi}^{\mu}({\zeta}))^T\\ \nonumber
&=(\Hat{C}^{T}P^{\mu}+d_{1}^{T})\Hat{\Psi}^{\mu}({\zeta})(\Hat{\Psi}^{\mu}({\zeta}))^{T}(\Hat{C}^{T}\mathcal{P}^{\mu}+d_{1}^{T})^{T}\\
&=C_{2}^{T}\Hat{\Psi}^{\mu}({\zeta})(\Hat{\Psi}^{\mu}({\zeta}))^{T}C_{2}, \label{6.11} 
\end{align}
where $ C_{2}=(\Hat{C}^{T}\mathcal{P}^{\mu}+d_{1}^{T})^{T}$.\\
\begin{equation}
\Hat{\Psi}^{\mu}({\zeta})(\Hat{\Psi}^{\mu}({\zeta}))^{T}C_{2}\approx\tilde{C}\Hat{\Psi}^{\mu}({\zeta}),\label{6.12}
\end{equation}
where $\tilde{C}$ is product operational matrix.
\begin{equation}
  ~~\tilde{C}=\Big(\int_{0}^{1} (\Hat{\Psi}^{\mu}({\zeta})(\Hat{\Psi}^{\mu}({\zeta}))^TC_{2}(\Hat{\Psi}^{\mu}({\zeta}))^{T})d{\zeta}\Big)\Hat{D}^{-1}(\mu).\label{6.13}
\end{equation}

\begin{equation}
\begin{split}
x^2({\zeta}) & \approx C_{2}^{T}\tilde{C}\Hat{\Psi}^{\mu}({\zeta}) =C_{3}^{T}\Hat{\Psi}^{\mu}({\zeta}),
\end{split}\label{6.14}
\end{equation}
where $C_{3}^T=C_{2}\tilde{C}$.\\
Now,
\begin{align*}
\Tilde{p}({\zeta})x^2({\zeta}) & \approx (\Hat{P}^{T}\Hat{\Psi}^{\mu}({\zeta}))(C_{3}^{T}\Hat{\Psi}^{\mu}({\zeta}))^T\\
& = \Hat{P}^{T}\Hat{\Psi}^{\mu}({\zeta})(\Hat{\Psi}^{\mu}({\zeta}))^{T}C_{3},
\end{align*}
\begin{equation}
\Hat{\Psi}^{\mu}({\zeta})(\Hat{\Psi}^{\mu}({\zeta}))^{T}C_{3} \approx \tilde{C}_{4}\Hat{\Psi}^{\mu}({\zeta}),\label{6.15}
\end{equation}
where $\tilde{C}_{4}$ is product operational matrix given as
\begin{equation}
\tilde{C}_{4}=\Big(\int_{0}^{1} (\Hat{\Psi}^{\mu}({\zeta})(\Hat{\Psi}^{\mu}({\zeta}))^{T}C_{3}(\Hat{\Psi}^{\mu}({\zeta}))^{T})d{\zeta}\Big)\Hat{D}^{-1}(\mu).\label{6.16}
\end{equation}
Thus,
\begin{align}\nonumber
~~~~~~~~ \bar{p}({\zeta})x^2({\zeta}) &  \approx \Hat{P}^{T}\tilde{C}_{4}\Hat{\Psi}^{\mu}({\zeta})\\
&=C_{5}^{T}\Hat{\Psi}^{\mu}({\zeta}),\label{6.17} 
\end{align}
where $C_{5}^{T}= \Hat{P}^{T}\tilde{C}_{4}.$\\
Similarly, approximating $u^2({\zeta})$, yields
\begin{align}\nonumber
u^2({\zeta}) &\approx (\Hat{U}^{T} \Hat{\Psi}^{\mu}({\zeta}))^2 \\\nonumber
&  = (\Hat{U}^{T} \Hat{\Psi}^{\mu}({\zeta})) (\Hat{U}^{T} \Hat{\Psi}^{\mu}({\zeta}))^T\\\nonumber
&= \Hat{U}^{T}\Hat{\Psi}^{\mu}({\zeta}) (\Hat{\Psi}^{\mu}({\zeta}))^{T} \Hat{U}\\  \nonumber
& \approx\Hat{ U}^{T} U_{2}\Hat{\Psi}^{\mu}({\zeta})\\
&=U_{3}\Hat{\Psi}^{\mu}({\zeta}),
\label{6.18}  
\end{align}
where $U_{2}$ is product operational matrix given as
\begin{equation*}
    U_{2}=\Big(\int_{0}^{1} (\Hat{\Psi}^{\mu}({\zeta})(\Hat{\Psi}^{\mu}({\zeta}))^{T}\Hat{U}(\Hat{\Psi}^{\mu}({\zeta}))^{T})d{\zeta}\Big)\Hat{D}^{-1}(\mu) ,
\end{equation*}
and $$ U_{3}=\Hat{U}^{T}U_{2}.$$
Now, 
\begin{align}\nonumber
\bar{q}({\zeta})u^2({\zeta}) &\approx (\Hat{Q}^{T} \Hat{\Psi}^{\mu}({\zeta}))(U_{3}^{T}(\Hat{\Psi}^{\mu}({\zeta}))^T \\ \nonumber
&= \Hat{Q}^{T}\Hat{\Psi}^{\mu}({\zeta}) (\Hat{\Psi}^{\mu}({\zeta}))^{T} U_{3}\\ \nonumber
& \approx \Hat{Q}^{T} U_{4}\Hat{\Psi}^{\mu}({\zeta})\\
&=U_{5}^{T}\Hat{\Psi}^{\mu}({\zeta}),
 \label{6.19}
\end{align}
where $U_{4}$ is product operational matrix given as
\begin{equation*}
   U_{4}=\Big(\int_{0}^{1} (\Hat{\Psi}^{\mu}({\zeta})(\Hat{\Psi}^{\mu}({\zeta}))^{T}U_{3}(\Hat{\Psi}^{\mu}({\zeta}))^{T})d{\zeta}\Big)\Hat{D}^{-1}(\mu),
\end{equation*}
and $$ U_{5}^{T}= \Hat{Q}^{T}U_{4}.$$
\\
The cost function $\Tilde{{J}}$ can be approximated as

\begin{align}\nonumber
\Tilde{{J}}\approx\Tilde{{J}}[\Hat{C}, \Hat{U}]=&\dfrac{1}{2}\int_0^1 \Big((\Hat{P}^{T}\Hat{\Psi}^{\mu}({\zeta}))((\Hat{C}^{T} \mathcal{ P}^{\mu} + d_{1}^{T})\Hat{\Psi}^{\mu}({\zeta})(\Hat{\Psi}^{\mu}({\zeta}))^{T}(\Hat{C}^{T} \mathcal{P}^{\mu}  d_{1}^{T})^{T} )\\\nonumber
&+(\Hat{Q}^{T}\Hat{\Psi}^{\mu}({\zeta}))(\Hat{U}^{T}\Hat{\Psi}^{\mu}({\zeta})(\Hat{\Psi}^{\mu}({\zeta}))^{T}\Hat{U})\Big)d{{\zeta}}\\
=&\dfrac{1}{2}\int_{0}^{1}\left(C_{5}^T+U_{5}^T\right)\Hat{\Psi}^{\mu}({\zeta})d{{\zeta}}.\label{6.20}
\end{align}
\\
The dynamical system can be approximated as 
$$ \Hat{C}^{T}\Hat{\Psi}^{\mu}({\zeta})= (\Hat{A}^{T}\Hat{\Psi}^{\mu}({\zeta}))( (\Hat{C}^{T}\mathcal{P}^{\mu} + d_{1}^{T} )\Hat{\Psi}^{\mu}({\zeta})) + (\Hat{B}^{T}\Hat{\Psi}^{\mu}({\zeta}))(\Hat{U}^{T}\Hat{\Psi}^{\mu}({\zeta})).$$
This implies $$ \Hat{C}^{T}\Hat{\Psi}^{\mu}({\zeta})= \Hat{A}^{T} \Tilde{C}\Hat{\Psi}^{\mu}({\zeta})+\Hat{B}^{T} U_{2}\Hat{\Psi}^{\mu}({\zeta}),$$
whence $$(\Hat{C}^{T}-\Hat{A}^{T} \Tilde{C}-\Hat{B}^{T} U_{2})\Hat{\Psi}^{\mu}({\zeta}) =0 .$$
Thus the dynamical system changes into system of algebraic equations:
\begin{equation}
    \Hat{C}^{T}-\Hat{A}^{T} \Tilde{C}-\Hat{B}^{T} U_{2}=0. \label{6.21} 
\end{equation}
Let
\begin{equation}
  \Tilde{{J}}^\ast[\Hat{C}, \Hat{U}, \eta^{\star}]= \Tilde{{J}}[\Hat{C}, \Hat{U}]+ (\Hat{C}^{T}-\Hat{A}^{T} \Tilde{C}-\Hat{B}^{T} U_{2})\eta^{\star}, \label{6.22} 
\end{equation}
where 
$\eta^{\star}=[ \eta_{1}^{\star}, \eta_{2}^{\star}, \dots,\eta_{\Hat{{m}}}^{\star}]^{T}$
is the unknown Lagrange multiplier.\\\\
Now the necessary conditions for the extremum are
\begin{equation}
\dfrac{\partial{\Tilde{{J}}^\star}} {\partial{\Hat{C}}}=0,\label{6.23} 
\end{equation}
\begin{equation}
\dfrac{\partial{\Tilde{{J}}^\star}} {\partial{\Hat{U}}}=0,\label{6.24} 
\end{equation}
\begin{equation}
  \dfrac{\partial{\Tilde{{J}}^\star}} {\partial{\eta^{\star}}}=0. \label{6.25}  
\end{equation}  
\\
The above system of linear  Eqs. \eqref{6.23}, \eqref{6.24}, and \eqref{6.25} have been solved for $\Hat{C}$, $\Hat{U}$, and $\eta^{\star}$ using the Lagrange multiplier method. By determining  $\Hat{C}$ and $\Hat{U}$, we have determined the approximated values of $u({\zeta})$ and $x({\zeta})$ from \eqref{6.5} and \eqref{6.10}, respectively. 
\\
\section{Error estimation }
In this section, the best approximation to a smooth function has been obtained by estimating the error norm using fractional Taylor wavelets.

\begin{lemma}\label{best approx}
Let $f(t)\in C^N[a, b]$ and ${P}_{N-1}({t})$ is the best interpolation polynomial to $f(t)$ at the roots of the $N$-degree shifted Chebyshev polynomial in $[a, b]$. Then 
$$|f(t)-{P}_{N-1}(t)|\leq \dfrac{({b}-{a})^N}{N!2^{2N-1}}\max_{\xi\in[{a}, {b}]} |f^N(\xi)|.$$ 
\end{lemma}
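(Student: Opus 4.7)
The plan is to combine two classical ingredients: the Lagrange remainder formula for polynomial interpolation, and the minimax property of the Chebyshev polynomial on a general interval. Neither is difficult on its own; the proof amounts to aligning them correctly with a careful rescaling from $[-1,1]$ to $[a,b]$.

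First I would invoke the standard interpolation error identity. Since $P_{N-1}$ interpolates $f\in C^{N}[a,b]$ at the $N$ nodes $t_{1},\dots,t_{N}$ (the roots of the degree-$N$ shifted Chebyshev polynomial on $[a,b]$), the classical remainder theorem provides, for each $t\in[a,b]$, some $\xi=\xi(t)\in[a,b]$ such that
$$f(t)-P_{N-1}(t)=\frac{f^{(N)}(\xi)}{N!}\prod_{i=1}^{N}(t-t_{i}).$$
Taking absolute values and bounding $|f^{(N)}(\xi)|\leq \max_{\eta\in[a,b]}|f^{(N)}(\eta)|$ reduces the problem to estimating the node polynomial $\bigl|\prod_{i=1}^{N}(t-t_{i})\bigr|$ uniformly on $[a,b]$.

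For the second step I would invoke the minimax property. On $[-1,1]$ the monic Chebyshev polynomial of degree $N$ equals $2^{1-N}T_{N}(x)$, has sup-norm $2^{1-N}$, and its roots are the usual Chebyshev nodes $x_{i}$. Applying the affine change of variable $t=\tfrac{b-a}{2}x+\tfrac{a+b}{2}$ gives $t-t_{i}=\tfrac{b-a}{2}(x-x_{i})$, hence
$$\prod_{i=1}^{N}(t-t_{i})=\Bigl(\frac{b-a}{2}\Bigr)^{N}\cdot 2^{1-N}T_{N}(x),$$
so that
$$\max_{t\in[a,b]}\Bigl|\prod_{i=1}^{N}(t-t_{i})\Bigr|=\Bigl(\frac{b-a}{2}\Bigr)^{N}\cdot\frac{1}{2^{N-1}}=\frac{(b-a)^{N}}{2^{2N-1}}.$$
Substituting this into the remainder formula yields the claimed inequality.

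The argument is essentially bookkeeping; the conceptual core is the well-known minimax property of the Chebyshev nodes. The only genuine pitfall is tracking constants: one must distinguish the monic shifted Chebyshev polynomial from the standard (non-monic) one, and correctly carry the factor $\bigl(\tfrac{b-a}{2}\bigr)^{N}$ produced by the affine rescaling. That is exactly where the exponent $2N-1$ in the final denominator originates, and it is the step I would verify most carefully.
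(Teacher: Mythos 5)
Your proof is correct, and the constant bookkeeping checks out: the Lagrange remainder gives the factor $\frac{1}{N!}\max|f^{(N)}|$, and the monic scaled Chebyshev node polynomial has sup-norm $\left(\frac{b-a}{2}\right)^{N}2^{1-N}=\frac{(b-a)^{N}}{2^{2N-1}}$ on $[a,b]$, which is exactly the stated bound. For comparison: the paper does not actually prove this lemma at all --- it states it without proof as a known result and immediately uses it in the proof of the next lemma --- so your argument supplies precisely the standard justification (interpolation remainder plus the minimax property of Chebyshev nodes) that the paper implicitly relies on.
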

\begin{lemma}\label{wavelets best approx}
Suppose $\displaystyle \sum_{{n}=1}^{2^{{k}-1}}\sum_{{m}=0}^{M-1}c_{{n}{m}}\psi_{{n}{m}}^{\mu}({\zeta})=\displaystyle \sum_{i=1}^{\Hat{m}}\Hat{c}_{i}\Hat{\psi}_{i}^{\mu}({\zeta})=\Hat{C}^{T}\Hat{\Psi}^{\mu}({\zeta})$  be the fractional Taylor wavelets expansion of the real sufficiently smooth function ${f}({\zeta})\in [0,1]$. Then there exists a real number $\Tilde{\mathcal{M}}$ such that 
$$||{f}({\zeta})-\Hat{C}^{T} \Hat{\Psi}^{\mu}({\zeta})||_{2}\leq \dfrac{\Tilde{\mathcal{M}}}{\Hat{m}!2^{2\Hat{m}-1}}.$$
\end{lemma}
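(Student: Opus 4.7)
The plan is to localize the global $L^2$ error to the $2^{k-1}$ subintervals $I_n = \bigl[\bigl(\tfrac{n-1}{2^{k-1}}\bigr)^{1/\mu}, \bigl(\tfrac{n}{2^{k-1}}\bigr)^{1/\mu}\bigr)$ that carry the supports of the wavelets $\psi_{n,0}^{\mu},\dots,\psi_{n,M-1}^{\mu}$, apply Lemma \ref{best approx} on each piece by comparing the wavelet expansion to a Chebyshev interpolant, and then sum the pieces. First I would observe that because the $\psi_{n,m}^{\mu}$ have essentially disjoint supports across different $n$, the $L^2$ error decomposes as
$$\bigl\|f(\zeta)-\Hat{C}^{T}\Hat{\Psi}^{\mu}(\zeta)\bigr\|_{2}^{2}
=\sum_{n=1}^{2^{k-1}}\int_{I_n}\!\Bigl(f(\zeta)-\sum_{m=0}^{M-1}c_{n,m}\psi_{n,m}^{\mu}(\zeta)\Bigr)^{\!2}d\zeta.$$

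Next I would exploit the structure of the wavelets. On each $I_n$, the set $\{\psi_{n,m}^{\mu}\}_{m=0}^{M-1}$ is (up to the normalizing constant $2^{(k-1)/2}\sqrt{2m+1}$) the monomial basis $(2^{k-1}\zeta^{\mu}-n+1)^{m}$, so it spans exactly the space $V_n$ of polynomials of degree $\le M-1$ in the new variable $\varsigma=\zeta^{\mu}$. Moreover, the coefficient formula \eqref{4.4} expresses $\Hat{C}^{T}$ as the $L^{2}$-orthogonal projection onto $\mathrm{span}\,\Hat{\Psi}^{\mu}$, so on each subinterval the wavelet partial sum is the $L^{2}(I_n)$-best approximation of $f$ in $V_n$. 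Consequently, for any other $Q\in V_n$,
$$\int_{I_n}\Bigl(f(\zeta)-\sum_{m}c_{n,m}\psi_{n,m}^{\mu}(\zeta)\Bigr)^{\!2}\!d\zeta
\le \int_{I_n}\bigl(f(\zeta)-Q(\zeta)\bigr)^{2}d\zeta
\le (b_n-a_n)\,\|f-Q\|_{\infty,I_n}^{2}.$$

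Now I would take $Q=P_{M-1}$ to be the Chebyshev interpolant of $g(\varsigma)=f(\varsigma^{1/\mu})$ on $[\tfrac{n-1}{2^{k-1}},\tfrac{n}{2^{k-1}}]$, pulled back through $\varsigma=\zeta^{\mu}$, which lies in $V_n$. Lemma \ref{best approx} (with $N=M$ on an interval of length $2^{-(k-1)}$ in $\varsigma$) yields
$$\|g-P_{M-1}\|_{\infty}\le \frac{2^{-(k-1)M}}{M!\,2^{2M-1}}\max_{\xi}|g^{(M)}(\xi)|,$$
and standard chain-rule estimates convert this into a sup-norm bound for $f-Q$ on $I_n$ with a constant depending only on $\mu$ and on $\|f^{(j)}\|_{\infty}$ for $j\le M$. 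Summing the squared bounds over the $2^{k-1}$ subintervals and taking the square root gives an estimate of the form $\mathcal{K}\,(M!\,2^{2M-1})^{-1}$; absorbing all the $\mu$-, $k$-, and derivative-dependent factors into a single constant and re-expressing the factorial/power in terms of the total dimension $\Hat{m}=2^{k-1}M$ then delivers the claimed inequality with $\tilde{\mathcal{M}}$ playing the role of a catch-all constant.

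The main obstacle will be the last reformulation: Lemma \ref{best approx} naturally produces a $1/(M!\,2^{2M-1})$ decay per subinterval rather than the $1/(\Hat{m}!\,2^{2\Hat{m}-1})$ decay stated in the conclusion, so care is needed to show that the change of denominator is legitimate for the class of "sufficiently smooth" $f$ (for which $\tilde{\mathcal{M}}$ is allowed to grow with $\Hat{m}$ through $\|f^{(M)}\|_{\infty}$ and the $\mu$-chain-rule derivatives). A secondary technical point is the change of variables $\varsigma=\zeta^{\mu}$: one must verify that a polynomial of degree $M-1$ in $\varsigma$ restricted to the standard subinterval corresponds exactly to an element of $\mathrm{span}\{\psi_{n,m}^{\mu}\}_{m=0}^{M-1}$ on $I_n$, and that the Jacobian $d\zeta=\mu^{-1}\varsigma^{1/\mu-1}d\varsigma$ contributes only a bounded multiplicative factor over each $I_n$, so as not to spoil the per-interval rate.
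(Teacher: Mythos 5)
Your decomposition over the wavelet supports and your observation that the wavelet partial sum is the local $L^2$-best approximant out of $V_n$ are both sound, and they match the opening of the paper's argument. But the obstacle you flag at the end is not a technicality to be absorbed later --- it is fatal to the route you chose. Applying Lemma \ref{best approx} with $N=M$ on each subinterval gives a per-piece error of order $1/(M!\,2^{2M-1})$, and no amount of folding $\mu$-, $k$-, and derivative-dependent factors into a constant converts this into the claimed $1/(\Hat{m}!\,2^{2\Hat{m}-1})$ with $\Hat{m}=2^{k-1}M$: for $k>1$ the claimed denominator is super-exponentially larger, so the inequality you would obtain is strictly weaker than the one to be proved, unless you allow $\Tilde{\mathcal{M}}$ to grow with $\Hat{m}$, which empties the statement of content. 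As written, the proposal therefore does not establish the lemma.

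The paper reaches the stated rate by making the opposite choice at the comparison step: it compares the piecewise approximant not with a local degree-$(M-1)$ Chebyshev interpolant on each subinterval, but with the single global interpolant $P_{\Hat{m}-1}$ of $f$ at the zeros of the degree-$\Hat{m}$ shifted Chebyshev polynomial on all of $[0,1]$, and then invokes Lemma \ref{best approx} once with $N=\Hat{m}$ and $b-a=1$; that is what produces the $\Hat{m}!\,2^{2\Hat{m}-1}$ denominator, with $\Tilde{\mathcal{M}}\geq\max_{\xi\in[0,1]}|f^{(\Hat{m})}(\xi)|$. If you want to repair your write-up you should switch to that global comparison --- though you would then face the converse difficulty, which the paper glosses over: $P_{\Hat{m}-1}$ restricted to a subinterval has degree $\Hat{m}-1>M-1$ and does not lie in your approximating space $V_n$, so the least-squares optimality of the wavelet partial sum does not immediately dominate it. Your local-interpolant idea is exactly what makes the optimality step airtight, but it cannot reach the stated rate; the paper's global interpolant reaches the stated rate but strains the optimality step.
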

\begin{proof}
We are able to write,
$$\int_{0}^{1}\left({f}({\zeta})-\Hat{C}^{T} \Hat{\Psi}^{\mu}({\zeta})\right)^2d{\zeta}= \sum_{{n}=1}^{2^{{k}-1}M}\int_{\left(\frac{{n}-1}{2^{{k}-1}}\right)^{\frac{1}{\mu}}}^{\left(\frac{{n}}{2^{{k}-1}}\right)^{\frac{1}{\mu}}}\left({f}({\zeta})-\Hat{C}^{T} \Hat{\Psi}^{\mu}({\zeta})\right)^2d{\zeta},$$
On the subinterval $\left[\left(\frac{{n}-1}{2^{{k}-1}}\right)^{\frac{1}{\mu}},\left(\frac{{n}}{2^{{k}-1}}\right)^{\frac{1}{\mu}}\right)$, $\Hat{C}^{T}\Hat{\Psi}^{\mu}({\zeta})$
is a polynomial of degree at most $\Hat{m}= 2^{{k}-1}M$, that approximates ${f}$ with the least-square property.\\\\
As ${P}_{\Hat{m}-1}({\zeta})$ is the best approximation to ${f}({\zeta})$, that agrees with ${f}({\zeta})$ at the zeros of shifted Chebyshev polynomial. Therefore, the following has been obtained by using Lemma \ref{best approx}:
\begin{equation*}
\begin{split}
||{f}({\zeta})-\Hat{C}^{T} \Hat{\Psi}^{\mu}({\zeta})||_{2}&=\sum_{{n}=1}^{2^{{k}-1}M}\int_{\left(\frac{{n}-1}{2^{{k}-1}}\right)^{\frac{1}{\mu}}}^{\left(\frac{{n}}{2^{{k}-1}}\right)^{\frac{1}{\mu}}}\left({f}({\zeta})-\Hat{C}^{T} \Hat{\Psi}^{\mu}({\zeta})\right)^2d{\zeta} \\
& \leq\sum_{{n}=1}^{2^{{k}-1}M}\int_{\left(\frac{{n}-1}{2^{{k}-1}}\right)^{\frac{1}{\mu}}}^{\left(\frac{{n}}{2^{{k}-1}}\right)^{\frac{1}{\mu}}}\left({f}({\zeta})-{P}_{\Hat{m}-1}({\zeta})\right)^2d{\zeta} \\
&\leq \int_{0}^{1}\Big(\dfrac{1}{\Hat{m}!2^{2\Hat{m}-1}}\max_{\xi\in [0, 1]}|{f}^{(\Hat{m})}(\xi)|\Big)^2 d{\zeta}.
\end{split}   
\end{equation*}
Let us consider that there exists a real number $\Tilde{\mathcal{M}}$ such that
$$\max_{\xi\in[0, 1]} |{f}^{(\Hat{m})}(\xi)| \leq \Tilde{\mathcal{M}}.$$
Thus,
\begin{align*}
\begin{split}
||{f}({\zeta})-\Hat{C}^{T}\Hat{\Psi}^{\mu}({\zeta})||_{2}^2 & = \int_{0}^{1}\left({f}({\zeta})-\Hat{C}^{T} \Hat{\Psi}^{\mu}({\zeta})\right)^2d{\zeta}  \\
&\leq \int_{0}^{1}\Big(\dfrac{\Tilde{\mathcal{M}}}{\Hat{m}!2^{2\Hat{m}-1}}\Big)^2 d{\zeta}.\\
\end{split}    
\end{align*}
This implies that
\begin{align*}
||{f}({\zeta})-\Hat{C}^{T} \Hat{\Psi}^{\mu}({\zeta})||_{2} \leq \dfrac{\Tilde{\mathcal{M}}}{\Hat{m}!2^{2\Hat{m}-1}}.
\end{align*}
\end{proof}
Now an error estimate concerning   $\displaystyle |\inf_{\Gamma_{\Hat{m}}} \Tilde{J}-\inf_{\Gamma}\Tilde{J}|$  have been established   for the proposed method.
By considering $y({\zeta})= {_{0}^C}\mathcal{D}_{{\zeta}}^{\mu}x({\zeta})$, the problem Eqs. \eqref{1.1} and \eqref{1.2} is equivalent to the following problem:
\begin{equation}
\begin{split}
\min\Tilde{J} & = \int_{0}^{1} \mathfrak{F}({\zeta}, \mathcal{I}^{\mu}y({\zeta})+x_{0}, u({\zeta})) d{\zeta}\\
&= \int_{0}^{1} \mathfrak{F}({\zeta}, y({\zeta}), u({\zeta})) d{\zeta},
\end{split}\label{7.1}
\end{equation}
\begin{equation}
  y({\zeta})=\Tilde{{a}}({\zeta})(\mathcal{I}^{\mu}y({\zeta})+x_{0})+\Tilde{{b}}({\zeta})u({\zeta}).\label{7.2}
\end{equation}
\\
\begin{theorem}
 The set $\Gamma$ is consisting of all functions $(y({\zeta}),u({\zeta}))$ that satisfy Eq. \eqref{7.2} and $\Gamma_{\Hat{m}}$ is a subset of $\Gamma$ consisting of all functions   $\displaystyle \left(\sum_{j=1}^{\Hat{m}}x_{j}^{\star}\psi_{j}^{\mu}({\zeta}), \sum_{j=1}^{\Hat{m}}u_{j}^{\star}\psi_{j}^{\mu}({\zeta})\right)$. Then there exist real
numbers $\Tilde{\mathcal{M}}^{\star}$ and $\Tilde{\mathcal{L}} >0$, such that
\begin{align*}
\displaystyle|\inf_{\Gamma_{\Hat{m}}}\Tilde{J} - \displaystyle\inf_{\Gamma}\Tilde{J}|\leq \dfrac{\Tilde{\mathcal{L}}\Tilde{\mathcal{M}}^{\star}}{\Hat{m}!2^{2\Hat{m}-1}}.
\end{align*}
\end{theorem}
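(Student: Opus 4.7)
The plan is to exhibit, for any prescribed tolerance, an admissible wavelet pair in $\Gamma_{\hat m}$ that is $L^2$--close to a near--optimizer of $\tilde J$ on $\Gamma$, and then convert $L^2$ closeness of the state/control pair to closeness of the cost by Lipschitz continuity of the integrand $\mathfrak{F}$. Since $\Gamma_{\hat m}\subseteq\Gamma$ we already have $\inf_\Gamma\tilde J\le\inf_{\Gamma_{\hat m}}\tilde J$, so only the upper bound on $\inf_{\Gamma_{\hat m}}\tilde J$ requires work.

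First, fix $\epsilon>0$ and choose a sufficiently smooth $(y^\ast,u^\ast)\in\Gamma$ with $\tilde J(y^\ast,u^\ast)\le \inf_\Gamma\tilde J+\epsilon$. Let $y_{\hat m}^\sharp$ and $u_{\hat m}^\sharp$ denote their $\hat m$--term fractional Taylor wavelet expansions. Lemma~\ref{wavelets best approx} then supplies a constant $\tilde{\mathcal{M}}^\star$ such that
$$\|y^\ast-y_{\hat m}^\sharp\|_2+\|u^\ast-u_{\hat m}^\sharp\|_2\;\le\;\dfrac{\tilde{\mathcal{M}}^\star}{\hat m!\,2^{2\hat m-1}}.$$

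Second, the raw truncations generally do not satisfy the constraint (7.2), so they must be corrected into $\Gamma_{\hat m}$. A convenient route is to keep $\bar u_{\hat m}:=u_{\hat m}^\sharp$ in the wavelet span and determine $\bar y_{\hat m}$ by solving the linear fractional Volterra equation
$$\bar y_{\hat m}({\zeta})=\Tilde a({\zeta})\bigl(\mathcal{I}^\mu \bar y_{\hat m}({\zeta})+x_0\bigr)+\Tilde b({\zeta})\bar u_{\hat m}({\zeta}),$$
in the fractional Taylor wavelet span; the boundedness of $\Tilde a$, $\Tilde b$, $\Tilde b^{-1}$ and of $\mathcal{I}^\mu$ on $L^2[0,1]$ yields a continuous--dependence estimate of the form
$$\|y^\ast-\bar y_{\hat m}\|_2+\|u^\ast-\bar u_{\hat m}\|_2\;\le\;\dfrac{C\,\tilde{\mathcal{M}}^\star}{\hat m!\,2^{2\hat m-1}},$$
with $C$ absorbing the relevant operator norms.

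Third, since the quadratic integrand in (1.1) is Lipschitz in $(x,u)$ on bounded sets with some constant $\tilde{\mathcal{L}}_0$ depending on $\|\Tilde p\|_\infty,\|\Tilde q\|_\infty$ and the sup of the admissible trajectories, Cauchy--Schwarz gives
$$|\tilde J(y^\ast,u^\ast)-\tilde J(\bar y_{\hat m},\bar u_{\hat m})|\;\le\;\tilde{\mathcal{L}}_0\bigl(\|y^\ast-\bar y_{\hat m}\|_2+\|u^\ast-\bar u_{\hat m}\|_2\bigr).$$
Chaining the three displays, bundling $C$ and $\tilde{\mathcal{L}}_0$ into a single constant $\tilde{\mathcal{L}}$, and then letting $\epsilon\to 0$ delivers the asserted bound.

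The main obstacle is the second step: producing an \emph{admissible} wavelet pair that is as close to $(y^\ast,u^\ast)$ as the raw projection. One cannot simply truncate, because the truncated pair generically fails (7.2); the control--first, solve--the--state strategy above reduces the difficulty to a bounded--inverse estimate for a linear fractional integral equation, which is standard once the regularity assumptions on $\Tilde a,\Tilde b$ are in place. The other subtle point is justifying the Lipschitz constant $\tilde{\mathcal{L}}_0$ uniformly over a set containing both $(y^\ast,u^\ast)$ and its correction, which follows from an a priori $L^\infty$ bound on optimal trajectories of (1.1)--(1.3).
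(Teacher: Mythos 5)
Your strategy is at its core the same as the paper's: both arguments use $\Gamma_{\Hat{m}}\subseteq\Gamma$ for one inequality, then bound $|\inf_{\Gamma_{\Hat{m}}}\Tilde{J}-\inf_{\Gamma}\Tilde{J}|$ by the cost difference between a (near-)optimal pair and its $\Hat{m}$-term fractional Taylor wavelet truncation, convert $L^2$-closeness of $(y,u)$ into closeness of $\Tilde{J}$ via the assumed Lipschitz continuity of $\mathfrak{F}$, and finish with the best-approximation bound of Lemma~\ref{wavelets best approx}. Where you differ is in the two refinements you insert. First, you work with an $\epsilon$-near minimizer and let $\epsilon\to 0$, whereas the paper takes ``the exact solution'' $(y,u)$, implicitly assuming the infimum over $\Gamma$ is attained; your version is the more careful one. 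Second, you correctly observe that the raw truncation of an admissible pair generically violates the constraint \eqref{7.2}, so it need not belong to $\Gamma_{\Hat{m}}$ — a point the paper passes over silently when it writes $\inf_{\Gamma_{\Hat{m}}}\Tilde{J}\le\Tilde{J}\bigl(\sum_j x_j^{\star}\psi_j^{\mu},\sum_j u_j^{\star}\psi_j^{\mu}\bigr)$. Your proposed repair, however, does not fully close this gap: if $\bar{u}_{\Hat{m}}$ lies in the wavelet span, the exact solution of the Volterra equation $\bar{y}=\Tilde{a}(\mathcal{I}^{\mu}\bar{y}+x_0)+\Tilde{b}\bar{u}_{\Hat{m}}$ will in general \emph{not} lie in the span, so the corrected pair lands in $\Gamma$ rather than in $\Gamma_{\Hat{m}}$; a Galerkin-type solve keeps you in the span but then satisfies \eqref{7.2} only up to a residual that you would have to estimate. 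A further point shared with the paper: the constant $\Tilde{\mathcal{M}}^{\star}$ bounds $\Hat{m}$-th derivatives of the chosen pair and therefore depends on $\Hat{m}$ (and, in your version, potentially on $\epsilon$), so for the stated bound to carry its intended meaning you should make explicit that these derivative bounds can be taken uniformly.
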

\begin{proof}
Assume that $\mathfrak{F}$ is Lipschitz  continuous function with respect to $(y, u)$ with a Lipschitz constant $\Tilde{\mathcal{L}}$.
\begin{align}
 |\mathfrak{F}({\zeta}, y, u)-\mathfrak{F}({\zeta}, \Tilde{y}, \Tilde{u})|\leq \Tilde{\mathcal{L}}(||y-\Tilde{y}||_{2}+||u-\Tilde{u}||_{2}),\label{7.3}
 \end{align}   
for all ${\zeta}\in [0,1]$, for all $\Tilde{y}, \Tilde{u}, y, u $  belong to $L^2[0,1]$. Since $\Gamma_{\Hat{m}} \subseteq \Gamma $,
 therefore $\displaystyle\inf_{\Gamma_{\Hat{m}}}\Tilde{J}\geq $ 
 $\displaystyle\inf_{\Gamma} \Tilde{J} $.
 \\\\
Let $(y, u)$ be the exact solution for state and control functions
and $\displaystyle\left(\sum_{j=1}^{\Hat{m}}x_{j}^{\star}\psi_{j}^{\mu}({\zeta}), \sum_{j=1}^{\Hat{m}}u_{j}^{\star}\psi_{j}^{\mu}({\zeta})\right)$ be the fractional Taylor wavelets  expansion of $(y, u)$, then
\begin{equation*}
\begin{split}
 |\displaystyle\inf_{\Gamma_{\Hat{m}}}\Tilde{J} - \displaystyle\inf_{\Gamma}\Tilde{J}| \leq& \left|\Tilde{J}\left(\sum_{j=1}^{\Hat{m}}x_{j}^{\star}\psi_{j}^{\mu}({\zeta}), \sum_{j=1}^{\Hat{m}}u_{j}^{\star}\psi_{j}^{\mu}({\zeta})\right)-\Tilde{J}(y,u)\right|\\  
=&\left|\int_{0}^{1}\mathfrak{F}\left({\zeta}, \sum_{j=1}^{\Hat{m}}x_{j}^{\star}\psi_{j}^{\mu}({\zeta}), \sum_{j=1}^{\Hat{m}}u_{j}^{\star}\psi_{j}^{\mu}({\zeta})\right)d{\zeta} -\int_{0}^{1} \mathfrak{F}({\zeta}, y({\zeta}), u({\zeta}))d{\zeta}\right| \\
\leq &\int_{0}^{1}\left|\mathfrak{F}\left({\zeta}, \sum_{j=1}^{\Hat{m}}x_{j}^{\star}\psi_{j}^{\mu}({\zeta}), \sum_{j=1}^{\Hat{m}}u_{j}^{\star}\psi_{j}^{\mu}({\zeta})\right)- \mathfrak{F}\left({\zeta},\sum_{j=1}^{\Hat{m}}x_{j}^{\star}\psi_{j}^{\mu}({\zeta}),u({\zeta})\right)\right|d{\zeta}\\
&+  \int_{0}^{1} \left|\mathfrak{F}\left({\zeta},\sum_{j=1}^{\Hat{m}}x_{j}^{\star}\psi_{j}^{\mu}({\zeta}), u({\zeta}))\right) - \mathfrak{F}({\zeta}, y({\zeta}), u({\zeta})) \right|d{\zeta}.\\
\end{split}
\end{equation*}
By using the Lemmas \ref{best approx} and \ref{wavelets best approx} , it follows:
\begin{equation*}
\begin{split}
|\displaystyle\inf_{\Gamma_{\Hat{m}}}\Tilde{J} - \displaystyle\inf_{\Gamma}\Tilde{J}| & \leq \int_{0}^{1} \Tilde{\mathcal{L}} \left(||y-  \sum_{j=1}^{\Hat{m}}x_{j}^{\star}\psi_{j}^{\mu}({\zeta})||_{2}+||u-\sum_{j=1}^{\Hat{m}}u_{j}^{\star}\psi_{j}^{\mu}(t)||_{2}\right)d{\zeta}\\
 & \leq \Tilde{\mathcal{L}} \left(\dfrac{\Tilde{\mathcal{M}}_{1}}{\Hat{m}!2^{2\Hat{m}-1}}+\dfrac{\Tilde{\mathcal{M}}_{2}}{\Hat{m}!2^{2\Hat{m}-1}}\right),\\
\end{split}  
\end{equation*}
where $\max_{{\zeta}\in[0,1]}|y^{(\Hat{m})}({\zeta})|=\Tilde{\mathcal{M}}_{1}$ 
and  $\max_{{\zeta}\in[0,1]}|u^{(\Hat{m})}({\zeta})|= \Tilde{\mathcal{M}}_{2}$.\\
Hence
\begin{align*}
|\displaystyle\inf_{\Gamma_{\Hat{m}}}\Tilde{J} - \displaystyle\inf_{\Gamma}\Tilde{J}|\leq \dfrac{\Tilde{\mathcal{L}}\Tilde{\mathcal{M}}^{\star}}{\Hat{m}!2^{2\Hat{m}-1}}, 
\end{align*}
where  $\Tilde{\mathcal{M}}^{\star}= \Tilde{\mathcal{M}}_{1}+ \Tilde{\mathcal{M}}_{2}$.\\\\
\end{proof}

\section{ Convergence analysis}
In this section, convergence analysis has been analyzed.
\begin{theorem}
The approximate solutions $y({\zeta})\approx\Hat{Y}^T\Hat{\Psi}^{\mu}({\zeta})$ and $u({\zeta})\approx \Hat{U}^T\Hat{\Psi}^{\mu}({\zeta})$, converge respectively to the exact solution as $\Hat{m}$, the number of the fractional Taylor wavelets tends to infinity.\\
\end{theorem}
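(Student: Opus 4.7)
The plan is to reduce the statement to a direct application of Lemma 7.2 applied separately to the exact state and control functions, and then argue that the error bound vanishes in the limit $\hat{m}\to\infty$. So first I would let $y(\zeta)$ and $u(\zeta)$ denote the exact state and control, and take $\hat{Y}^T\hat{\Psi}^{\mu}(\zeta)$ and $\hat{U}^T\hat{\Psi}^{\mu}(\zeta)$ to be their $L^2$-projections onto the span of the first $\hat{m}=2^{k-1}M$ fractional Taylor wavelets, which is exactly the situation to which the lemma applies.

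Second, I would invoke Lemma 7.2 once for $y$ and once for $u$, producing the two estimates
\begin{equation*}
\bigl\|y(\zeta)-\hat{Y}^T\hat{\Psi}^{\mu}(\zeta)\bigr\|_{2}\leq \dfrac{\tilde{\mathcal{M}}_{1}}{\hat{m}!\,2^{2\hat{m}-1}},\qquad \bigl\|u(\zeta)-\hat{U}^T\hat{\Psi}^{\mu}(\zeta)\bigr\|_{2}\leq \dfrac{\tilde{\mathcal{M}}_{2}}{\hat{m}!\,2^{2\hat{m}-1}},
\end{equation*}
where $\tilde{\mathcal{M}}_{1}$ and $\tilde{\mathcal{M}}_{2}$ bound $|y^{(\hat{m})}|$ and $|u^{(\hat{m})}|$ on $[0,1]$, as in the error-estimation section. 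Since the denominator $\hat{m}!\,2^{2\hat{m}-1}$ grows faster than any single exponential in $\hat{m}$, under the standing assumption that $y$ and $u$ are smooth enough that $\tilde{\mathcal{M}}_{1},\tilde{\mathcal{M}}_{2}$ are dominated by this growth, the right-hand sides tend to zero. Passing to the limit $\hat{m}\to\infty$ (equivalently $k\to\infty$ or $M\to\infty$) then yields $L^{2}$-convergence of $\hat{Y}^T\hat{\Psi}^{\mu}$ to $y$ and of $\hat{U}^T\hat{\Psi}^{\mu}$ to $u$, which is the claim.

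The main subtlety, and really the only thing that needs care, is the dependence of $\tilde{\mathcal{M}}_{i}$ on $\hat{m}$: Lemma 7.2 bounds the $\hat{m}$-th derivative, so one must argue (or assume) that this derivative does not grow so fast with $\hat{m}$ as to outpace $\hat{m}!\,2^{2\hat{m}-1}$. In practice this is the usual analyticity / uniform smoothness hypothesis on the exact solution, and once it is in force the conclusion follows immediately from the factorial growth of the denominator. No further structural argument about the optimization, the Lagrange multiplier system, or the operational matrices is needed for this particular theorem, since the wavelet-approximation lemma already encapsulates all the required quantitative information.
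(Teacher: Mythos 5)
There is a genuine gap: you have identified the computed coefficient vectors $\Hat{Y}$ and $\Hat{U}$ with the $L^2$-projection coefficients of the exact optimal pair $(y,u)$, but that is not what they are. The method determines $\Hat{Y}$ and $\Hat{U}$ by solving the \emph{discretized} optimization problem (the Lagrange multiplier system over the finite-dimensional feasible set $\Gamma_{\Hat{m}}$), and there is no a priori reason why the minimizer of the truncated problem should coincide with, or even be close to, the truncation of the minimizer of the continuous problem. Lemma 7.2 only tells you that the best wavelet approximant of a fixed smooth function converges to that function; it says nothing about whether minimizers of a sequence of restricted problems converge to the minimizer of the full problem. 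Your closing remark that ``no further structural argument about the optimization\dots is needed'' is precisely where the argument breaks down: the optimization structure is the whole content of this theorem, since the pure approximation statement is already Lemma 7.2.

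The paper's own proof works with that structure directly. It considers the nested feasible sets $\Gamma_{1}\subseteq\Gamma_{2}\subseteq\dots\subseteq\Gamma_{\Hat{m}}\subseteq\dots\subseteq\Gamma$, observes that the corresponding infima $\inf_{\Gamma_{\Hat{m}}}\Tilde{J}_{\Hat{m}}$ form a non-increasing sequence bounded below by $\inf_{\Gamma}\Tilde{J}$ (hence convergent to some $\omega\geq\inf_{\Gamma}\Tilde{J}$), and then uses a density-plus-continuity argument --- pick a feasible $(y,u)$ with cost within $\epsilon$ of $\inf_{\Gamma}\Tilde{J}$, approximate it by its wavelet expansion, and use continuity of $\Tilde{J}$ to force $\inf_{\Gamma_{\Hat{m}}}\Tilde{J}_{\Hat{m}}<\inf_{\Gamma}\Tilde{J}+2\epsilon$ for $\Hat{m}$ large --- to conclude $\omega=\inf_{\Gamma}\Tilde{J}$. (Strictly speaking this establishes convergence of the optimal \emph{values} rather than of the optimal state and control functions themselves; passing from values to minimizers would additionally require some strict convexity or coercivity of $\Tilde{J}$, which neither you nor the paper addresses. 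But your proof does not reach even the value-convergence statement, because it never connects $\Hat{Y},\Hat{U}$ to the discretized minimization.)
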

 \begin{proof} 
 Suppose $\Gamma _{\Hat{m}}$ is the set of all $\left(\Hat{Y}^T \Hat{\Psi}^{\mu}({\zeta}), \Hat{U}^T\Hat{\Psi}^{\mu}({\zeta})\right) $ which satisfies the constraint Eq. \eqref{7.2}.\\\\
Using convergence property of fractional Taylor wavelets, for each $\left(\Hat{Y}_{1}\Hat{\Psi}^{\mu}({\zeta}),\Hat{U}_{1}\Hat{\Psi}^{\mu}({\zeta})\right)\in \Gamma_{\Hat{m}} $, there exists a unique pair of functions 
 $(y_{1}({\zeta}),  u_{1}({\zeta})) $  such that
 $$\left(\Hat{Y}_{1}^T\Hat{\Psi}^{\mu}({\zeta}), \Hat{U}_{1}^T\Hat{\Psi}^{\mu}({\zeta})\right)\to (y_{1}({\zeta}),  u_{1}({\zeta})),$$ 
 as $ \Hat{m}\to \infty $.
 It is obvious that $(y_{1}({\zeta}),  u_{1}({\zeta}))\in \Gamma,$ where $\Gamma$ is the set of all solutions that satisfy the constraint given in Eq. \eqref{7.2}. So as $\Hat{m}\to \infty,$ each element in $\Gamma_{\Hat{m}}$ tends to an element in $\Gamma$.\\\\
 Furthermore,  as $\Hat{m}\to \infty$, then \\
   $$\Tilde{J}_{1}^{\Hat{m}} = \Tilde{J}\left(\Hat{Y}_{1}^T\Hat{\Psi}^{\mu}({\zeta}), \Hat{U}_{1}^T\Hat{\Psi}^{\mu}({\zeta})\right)\to \Tilde{J}_{1},$$
 where $\Tilde{J}_{1}^{\Hat{m}}$ is the value of the cost
function corresponding to the pair $\left(\Hat{Y}_{1}^T\Hat{\Psi}^{\mu}({\zeta}), \Hat{U}_{1}^T\Hat{\Psi}^{\mu}({\zeta})\right)$ and $\Tilde{J}_{1}$ is the objective value corresponding to the feasible solution  $({y}_{1}({\zeta}),  {u}_{1}({\zeta}))$. \\\\
Now 
\begin{align*}
\Gamma_{1} \subseteq \Gamma_{2} \subseteq \dots \subseteq \Gamma_{\Hat{m}}\subseteq \Gamma_{\Hat{m}+1}\subseteq \dots\subseteq \Gamma ,
\end{align*} 
then
\begin{align*}
\inf_{\Gamma_{1}}{\Tilde{J}_{1}} \geq \inf_{\Gamma_{2}}{\Tilde{J}_{2}} \geq \dots \geq \inf_{\Gamma_{\Hat{m}}}{\Tilde{J}_{\Hat{m}}}\geq \inf_{\Gamma_{\Hat{m}+1}}{\Tilde{J}_{\Hat{m}+1}} \geq \dots\geq\inf_{\Gamma}{\Tilde{J}},
\end{align*}
which is a non-increasing and bounded sequence. Since every bounded monotone sequence is convergent.
Therefore, it converges to a number $\omega$, such that
$$ \omega \geq \inf_{\Gamma}\Tilde{J}.$$\\
Next, we have to show that 
$$ \omega=\lim_{\Hat{m}\to \infty}\inf_{\Gamma_{\Hat{m}}}{\Tilde{J}_{\Hat{m}}} =\inf_{\Gamma}\Tilde{J}. $$
Given $\epsilon>0$, let $(y(\zeta), u(\zeta)) \in \Gamma$\\
such that 
\begin{equation}
  \Tilde{J}(y({\zeta}), {u}({\zeta})) < \inf_{\Gamma}\Tilde{J}+\epsilon .\label{8.1}
\end{equation}
Since $\Tilde{J}(y({\zeta}),  u({\zeta}))$ is continuous, for this value
of $\epsilon$, there exists $K(\epsilon)$ such that if $\Hat{m}>K(\epsilon)$\\
\begin{align*}
\left|\Tilde{J}(y({\zeta}), u({\zeta}))-\Tilde{J}\left(\Hat{Y}\Hat{\Psi}^{\mu}({\zeta}), \Hat{U}\Hat{\Psi}^{\mu}({\zeta})\right)\right|< \epsilon .
\end{align*} 
This implies that
\begin{equation}
\Tilde{J}\left(\Hat{Y}\Hat{\Psi}^{\mu}({\zeta}), \Hat{U}\Hat{\Psi}^{\mu}({\zeta})\right) < \Tilde{J}(y({\zeta}), u({\zeta}))+\epsilon.
\label{8.2} 
\end{equation}
Using Eqs. \eqref{8.1} and \eqref{8.2}, it follows that
\begin{align}
\Tilde{J}\left(\Hat{Y}\Hat{\Psi}^{\mu}({\zeta}), \Hat{U}\Hat{\Psi}^{\mu}({\zeta})\right) < \Tilde{J}(y({\zeta}), u({\zeta}))+\epsilon  < \inf_{\Gamma}\Tilde{J}+2\epsilon.  \label{8.3} 
\end{align}
\\
Now
\begin{align}
\inf_{\Gamma}\Tilde{J}\leq  \inf_{\Gamma_{\Hat{m}}}\Tilde{J}_{\Hat{m}}\leq \Tilde{J}\left(\Hat{Y}\Hat{\Psi}^{\mu}({\zeta}), \Hat{U}\Hat{\Psi}^{\mu}({\zeta})\right).\label{8.4}
\end{align}
From Eqs. \eqref{8.3} and \eqref{8.4}, one get
\begin{align*}
   \inf_{\Gamma}\Tilde{J}\leq  \inf_{\Gamma_{\Hat{m}}}\Tilde{J}_{\Hat{m}}<\inf_{\Gamma}\Tilde{J}+2\epsilon.  
\end{align*}
Whence
$$
  0\leq \inf_{\Gamma_{\Hat{m}}}\Tilde{J}_{\Hat{m}}-\inf_{\Gamma}\Tilde{J}< 2\epsilon,$$ 
where $\epsilon$ is chosen arbitrary.\\
Hence
$$ \omega=\lim_{\Hat{m}\to \infty}\inf_{\Gamma_{\Hat{m}}}{\Tilde{J}_{\Hat{m}}} =\inf_{\Gamma}\Tilde{J}.$$

\end{proof}
\section{Numerical Examples}
In this section, the accuracy and utility of the proposed method are demonstrated with some numerical examples.
\begin{example}\label{1st example}
Consider the following time invariant problem:
$$\min\Tilde{{J}}=\dfrac{1}{2}\int_{0}^{1}\left(x^2({\zeta})+u^2({\zeta})\right)d{\zeta},$$
  
    $$_0^{C}\mathcal{D}_{{\zeta}}^{\mu}x({\zeta})=-x({\zeta})+u({\zeta}),$$
   
    $$x(0)=1.$$
\end{example}
Our aim is to find  state function $x({\zeta})$ and control function $u({\zeta})$   which minimize the cost function $\Tilde{{J}}$.  The exact solution of the above example for
$\mu = 1$ as follows:
\begin{equation}
    x({\zeta})=\cosh(\sqrt{2}{\zeta})+\varpi\sinh(\sqrt{2}{\zeta}), \label{9.1}
\end{equation}
\begin{equation}
u({\zeta})=(1+\sqrt{2}\varpi)\cosh(\sqrt{2}{\zeta})+(\sqrt{2}+\varpi)\sinh(\sqrt{2}{\zeta}) ,\label{9.2}
\end{equation}
where $\varpi\approx-0.98$.\\

This FOCP has been solved by the proposed method using fractional Taylor wavelets and Taylor wavelets with ${M} = 4$ and ${{k}} = 2$. Table \ref{table: 1} shows absolute error of state and control functions when $\mu =1$ and different values of $\zeta $. The approximate solutions obtained by TW and FTW methods for different values of $\mu$ for state function $x(\zeta)$ and control function $u(\zeta)$ are shown in Tables \ref{table: 2} and \ref{table: 3}, respectively. Table \ref{table:4} shows the approximate values
of cost function $\Tilde{{J}}$ obtained by  TW and FTW for  $\mu=0.85, 0.95, 0.99 $ and $ 1$. Figs. 1 and  2, respectively, show the exact and approximate solutions of the state function and control function when $\mu=1$ using fractional Taylor wavelet and Taylor wavelet. Figs. 3 and 4 show the approximate solutions of the state function and control function, respectively, for diverse values of $\mu$ by using the  Taylor wavelet method. Similarly, Figs. 5 and  6 show the approximate solutions of the state function and control function, respectively, for diverse values of $\mu$ by using the fractional Taylor wavelet method, and  it has been observed that there is a good agreement with the obtained solutions.\\

\includegraphics[scale=.77]{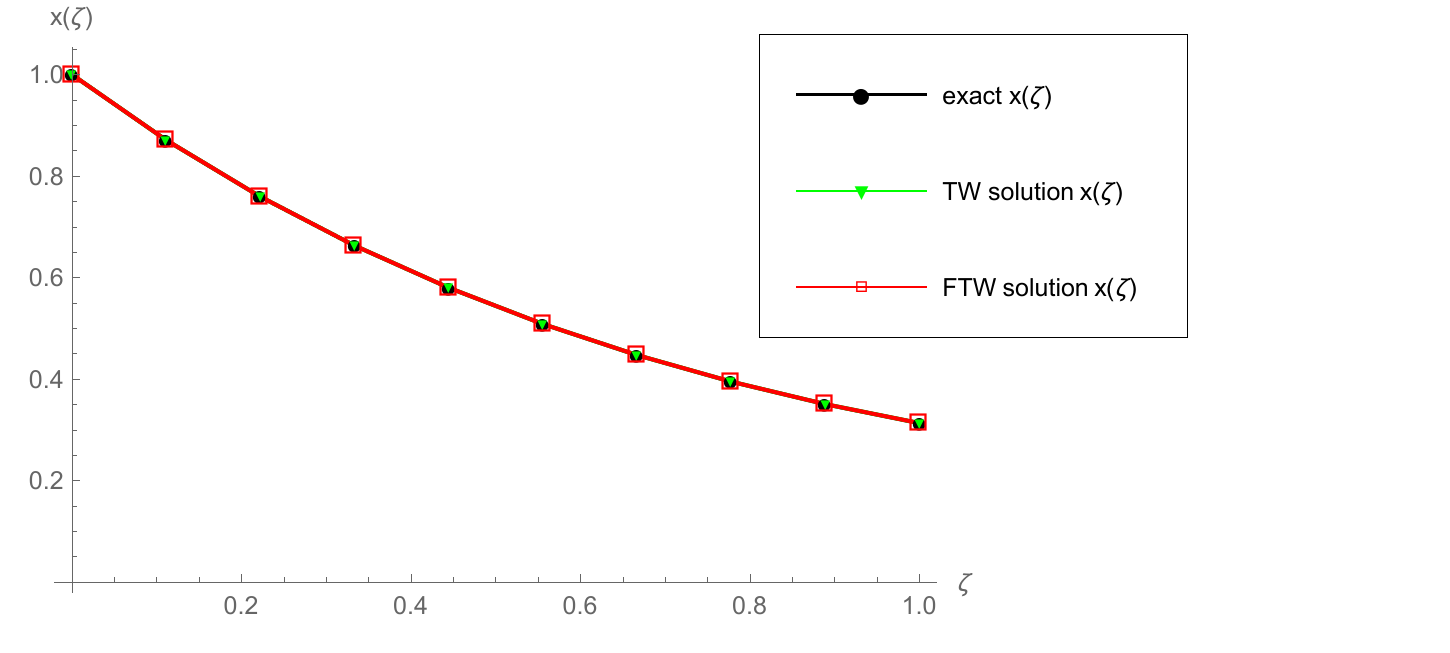}

Fig. 1: Exact , TW and FTW solutions of $x({\zeta})$ when $\mu=1.$
\\\\

\includegraphics[scale=.8]{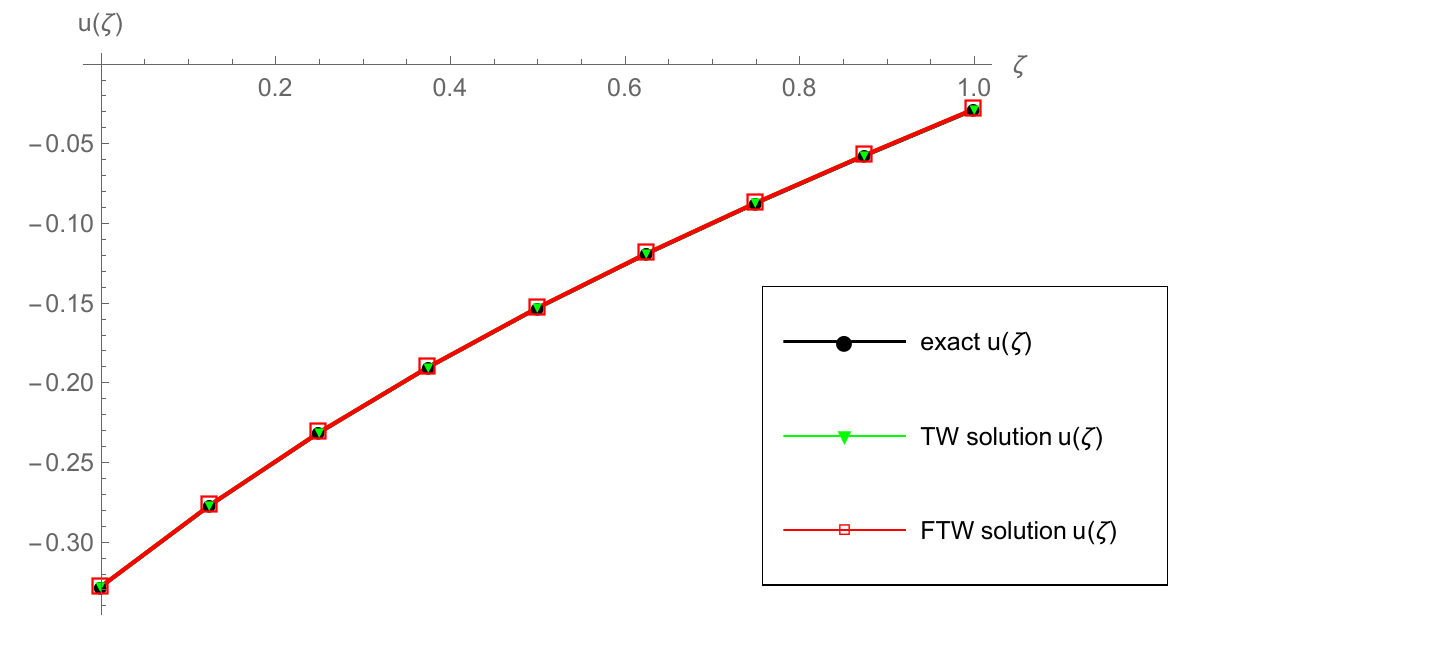}

Fig. 2: Exact, TW and FTW solutions of $u({\zeta})$ when $\mu=1.$\\

\begin{table}
%\begin{sidewaystable}
\centering
\caption{Absolute error of state and control functions when $\mu=1.$} 
\begin{tabular}{|c|c|c|} \hline
${\zeta}$ &  Absolute error of  $x({\zeta})$  &  Absolute error of  $u({\zeta})$ \\

\hline
0.1 & $5.35737\times10^{-5}$ &  $1.48744\times10^{-4}$\\
\hline
0.2 & $4.43828\times10^{-6}$ & $1.07836\times10^{-4}$   \\
\hline
0.3 & $1.20259\times10^{-5}$ & $1.4746\times10^{-4}$ \\
\hline
0.4 & $9.07815\times10^{-5}$ & $1.59711\times10^{-4}$   \\
\hline
0.5  &  $4.26262\times10^{-6}$ & $2.10423\times10^{-4}$ \\
\hline
0.6 & $9.69844\times10^{-5}$ & $2.23537\times10^{-4}$  \\
\hline
0.7 & $7.68442\times10^{-5}$ & $2.63508\times10^{-4} $\\
\hline
0.8  & $9.69206\times10^{-5}$ & $2.9988\times10^{-4}$  \\
\hline
0.9 &  $1.51849\times10^{-4}$ & $3.37278\times10^{-4}$\\
\hline
\end{tabular}
%\end{sidewaystable}
\label{table: 1}
\end{table}

\includegraphics[scale=0.75]{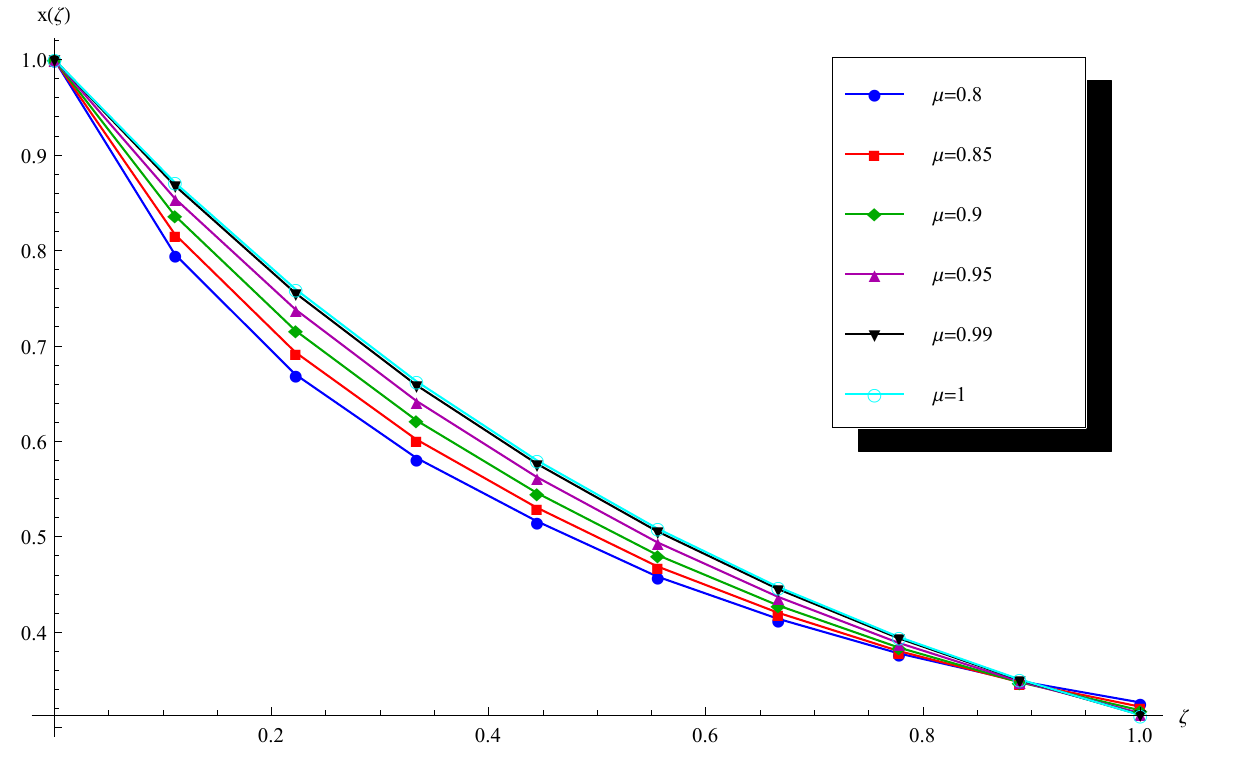}

Fig. 3: Using TW  solutions of $x({\zeta})$ for $\mu = 0.8, 0.85, 0.9, 0.95, 0.99$ and $ 1.$\\

\includegraphics[scale=0.75]{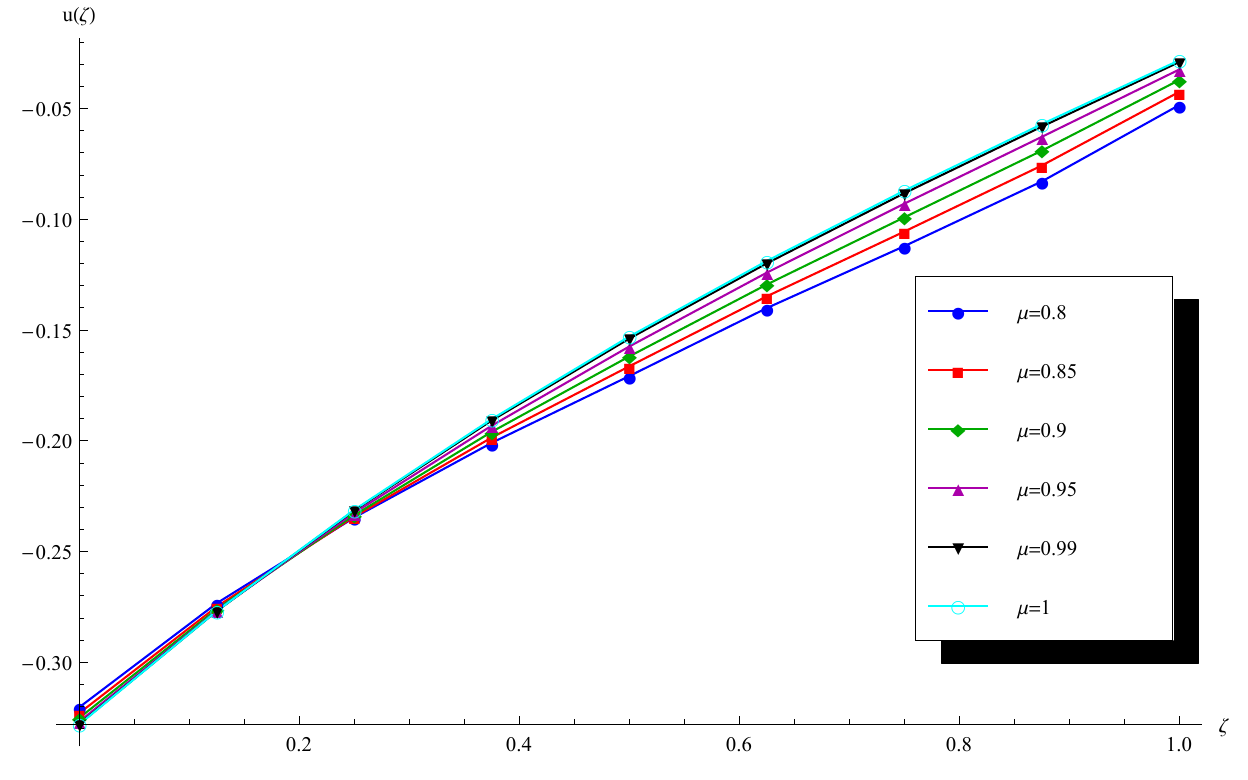}

Fig. 4: Using TW  solutions of $u({\zeta})$ for $\mu = 0.8, 0.85, 0.9, 0.95, 0.99$ and $ 1.$
\\
\includegraphics[scale=0.75] {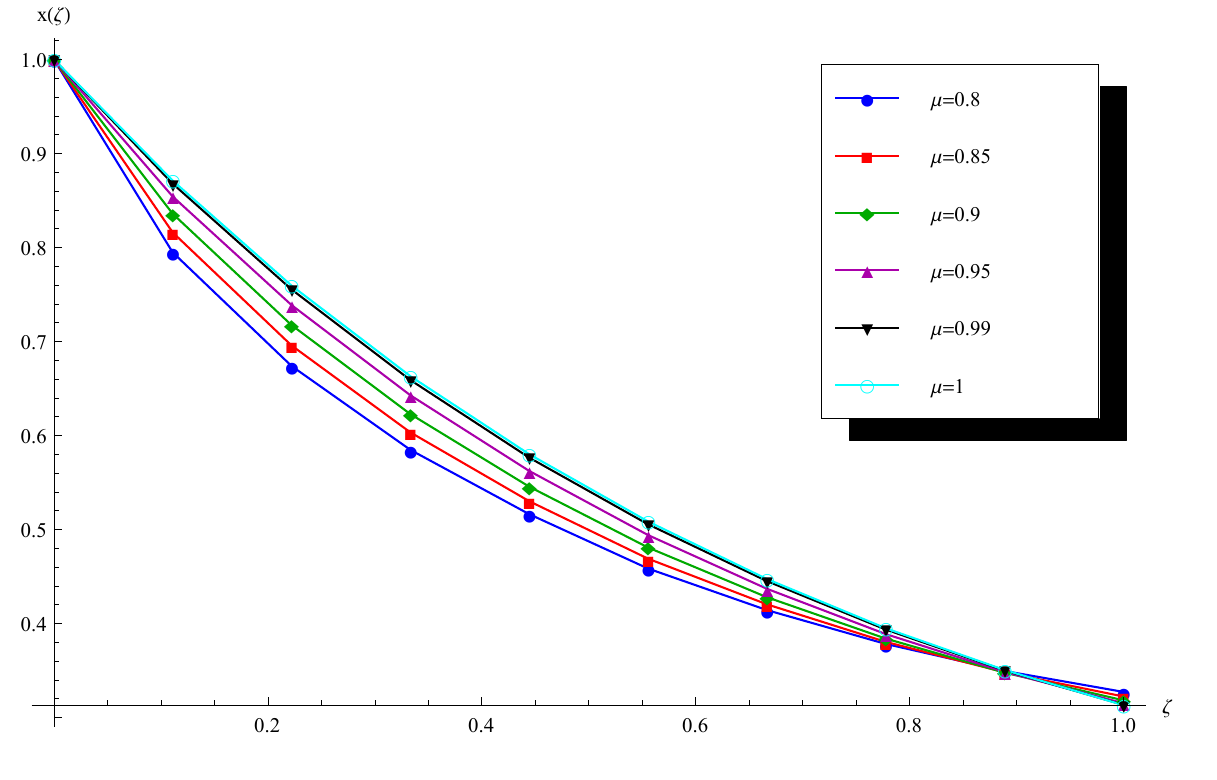}

Fig. 5: Using FTW  solutions of $x({\zeta})$ for $\mu = 0.8, 0.85, 0.9, 0.95, 0.99$ and $ 1.$\\

\begin{landscape}
\begin{table}
%\begin{sidewaystable}
\centering
\caption{Approximate results of $x({\zeta})$ for $\mu=0.5, 0.75, 0.85, 0.95$ and $ 0.99.$} 
\begin{tabular}{|c|c|c|c|c|c|c|c|c|c|c|c|c|c|c|c|c|c} \hline
${\zeta}$ &  \multicolumn{2}{|c|}{$\mu=0.5$} &\multicolumn{2}{|c|}{$\mu=0.75$} &\multicolumn{2}{|c|}{$\mu=0.85$}& \multicolumn{2}{|c|}{$\mu=0.95$} &\multicolumn{2}{|c|}{$\mu=0.99$}\\[0.5ex] \cline{1-11}
& TW & FTW& TW &  FTW & TW & FTW & TW & FTW & TW & FTW \\[0.5ex] \cline{2-11}
\hline
0.1 & 0.695033 & 0.706716 &  0.826483 & 0.826205 & 0.866578 & 0.866054 & 0.899198  & 0.898982 & 0.910304 & 0.91026\\
\hline
0.2 &  0.585076 & 0.622558 &  0.729052 & 0.734432 & 0.779992 &  0.782106 & 0.824404 &  0.824876 & 0.840254 & 0.840334\\
\hline
0.3 & 0.534201 & 0.55043 &   0.670491  & 0.675079 & 0.722854  &0.724269  &0.770655 & 0.770923 & 0.78828 & 0.788325\\
\hline
0.4 & 0.508309 & 0.535243 &  0.636575 & 0.633937 &0.687537 &  0.688044 & 0.735357 & 0.735274 & 0.753365 & 0.753341\\
\hline
0.5 & 0.479621 & 0.520989  & 0.616584 & 0.622424 & 0.66812 & 0.670137 & 0.716283 & 0.71666 &  0.734461 & 0.734521\\
\hline
0.6 & 0.497024 & 0.528249 &  0.622844 & 0.62608 & 0.670801  & 0.671888 & 0.716051 &  0.716253 & 0.733272 & 0.733304\\
\hline
0.7 & 0.523824  &0.567539 &  0.647981 & 0.651046 & 0.691985 & 0.692859 & 0.733308 & 0.733455 & 0.749074 & 0.749097\\
\hline
0.8  & 0.585355 & 0.644608  & 0.698397 & 0.702236 & 0.735577 & 0.73656 & 0.770585 & 0.770731 & 0.784045 & 0.784067\\
\hline
0.9 & 0.70695 &  0.762689 &  0.780492  & 0.783741 & 0.805481  &0.806219  & 0.830413 & 0.830503 &  0.840366 & 0.840378\\
\hline
\end{tabular}

\footnotesize{TW: Taylor wavelet method; FTW: Fractional Taylor Wavelet method.}\\
%\end{sidewaystable}
\label{table: 2}
\end{table}
\begin{table}
%\begin{sidewaystable}
\centering
\small\addtolength{\tabcolsep}{-2pt}
\caption{Approximate results of $u({\zeta})$ for $\mu=0.2,0.5, 0.75, 0.85, 0.95$ and $ 0.99.$ }
\begin{tabular}{|c|c|c|c|c|c|c|c|c|c|c|c|c|c|c|c|} \hline
${\zeta}$ &  \multicolumn{2}{|c|}{$\mu=0.5$} &\multicolumn{2}{|c|}{$\mu=0.75$} &\multicolumn{2}{|c|}{$\mu=0.85$}& \multicolumn{2}{|c|}{$\mu=0.95$} &\multicolumn{2}{|c|}{$\mu=0.99$}\\[0.5ex] \cline{1-11}
& TW & FTW & TW & FTW & TW & FTW & TW & FTW & TW & FTW \\ \cline{2-11}
\hline
0.1   & -0.293625 & -0.29615 &  -0.317214 & -0.317391 & -0.322982 & -0.322988 & -0.326832 &  -0.326767 & -0.327862 & -0.327842 \\ 
\hline
0.2 & -0.25459 & -0.257787 &  -0.271352 & -0.272044 &-0.27493 & -0.275297 & -0.276664  &-0.276797 & -0.276859 & -0.276886\\ 
\hline
0.3 &  -0.230403 & -0.235918 &   -0.23474 & -0.234353 & -0.234314 & -0.234108 & -0.232576 & -0.232552 & -0.231521 & -0.231522\\ 
\hline
0.4 &  -0.210476 & -0.208004 &  -0.203129 & -0.20592 & -0.19858 & -0.197934 & -0.193204 & -0.192989 & -0.190848 & -0.190806\\ 
\hline
0.5  & -0.194217 & -0.191504 &   -0.175169 & -0.172881 &-0.16643 & -0.165739 & -0.157491 & -0.157481 & -0.153912 & -0.153924\\ 
\hline
0.6   &-0.168319 & -0.17554  & -0.14534 & -0.145906 & -0.134854 & -0.134901 & -0.124171 &  -0.124165 & -0.11995 & -0.119949 \\ 
\hline
0.7 & -0.151406 &  -0.154272 &   -0.118895 & -0.119564 & -0.10562 & -0.105762 &  -0.0930333 & -0.0930285 & -0.0883 & -0.0882968 \\ 
\hline
0.8   & -0.131593 & -0.124293 &  -0.0905718 & -0.0896721 & -0.0758178 & -0.0755448 & -0.0628664 & -0.0628195 & -0.0582497 & -0.0582424\\ 
\hline
0.9   & -0.0969982 & -0.0835134 &   -0.0551091 & -0.0528598 & -0.0425355 & -0.0418162 & -0.0324607 & -0.0323489 & -0.029085 &  -0.0290693\\ 
\hline
\end{tabular}
%\end{sidewaystable}
\label{table: 3}
\end{table}

\end{landscape}

\begin{table}[h]
\centering
\caption{Approximate values of cost function $\Tilde{{J}}$ for different values of  $\mu$}
\begin{tabular}{|c|c|c|}\hline
$\mu $& TW     & FTW \\ \cline{1-3}
\hline
1 & 0.192909 & 0.192909\\
\hline
0.99 & 0.191531 & 0.191541\\
\hline
0.95 & 0.186105 &  0.186167 \\
\hline
0.85 & 0.173184 &0.173487\\
\hline
0.75 & 0.161202 & 0.162042\\
\hline
0.5 & 0.135314 &0.141662\\
\hline
 \end{tabular}
 \label{table:4}
\end{table}

\includegraphics[scale=0.75]{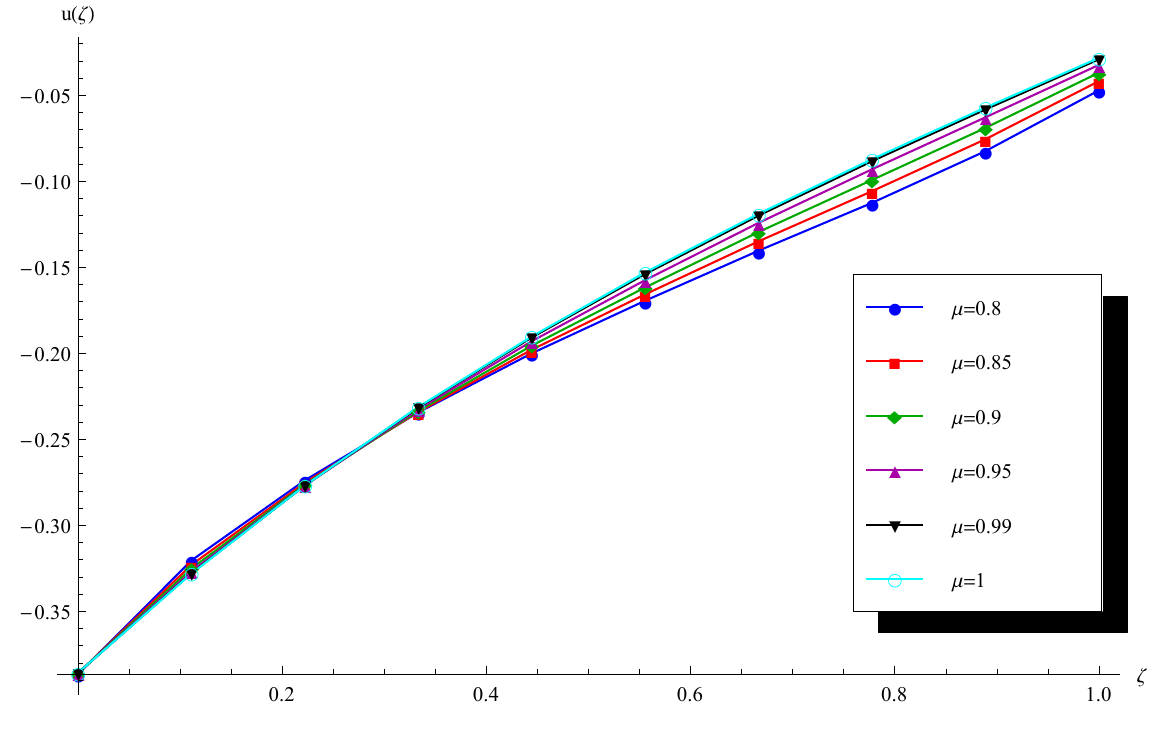}

Fig. 6: Using FTW  solutions of $u({\zeta})$ for $\mu = 0.8, 0.85, 0.9, 0.95, 0.99$ and $ 1.$
\\

\begin{example}\label{2nd example}
  Consider the following time invariant problem:
    $$\min\Tilde{{J}} = \dfrac{1}{2}\int_{0}^{1}\left(x^2({\zeta})+u^2({\zeta})\right)d{\zeta},$$
 $$_0^{C}\mathcal{D}_{{\zeta}}^{\mu}x({\zeta}) = tx({\zeta})+u({\zeta}),$$
 
    $$x(0)=1.$$     
\end{example}
Example \ref{2nd example} has been solved by the proposed method using a fractional Taylor wavelet and a Taylor wavelet with ${M} = 4$ and $ {{k}} = 2 $ for different values of $\mu$. The approximate solutions of state function $ x({\zeta}) $ and control function  $u({\zeta})$ are shown in Tables \ref{table:5} and \ref{table:6}, respectively. Figs. 7 and 8 show the approximate solutions of the state function and control function obtained by using the fractional Taylor wavelet method for various values of $\mu$. Similarly, Figs. 9 and 10 show the approximate solutions of the state function and control function obtained by using the  Taylor wavelet method for various values of $\mu$.

\includegraphics[scale=0.8]{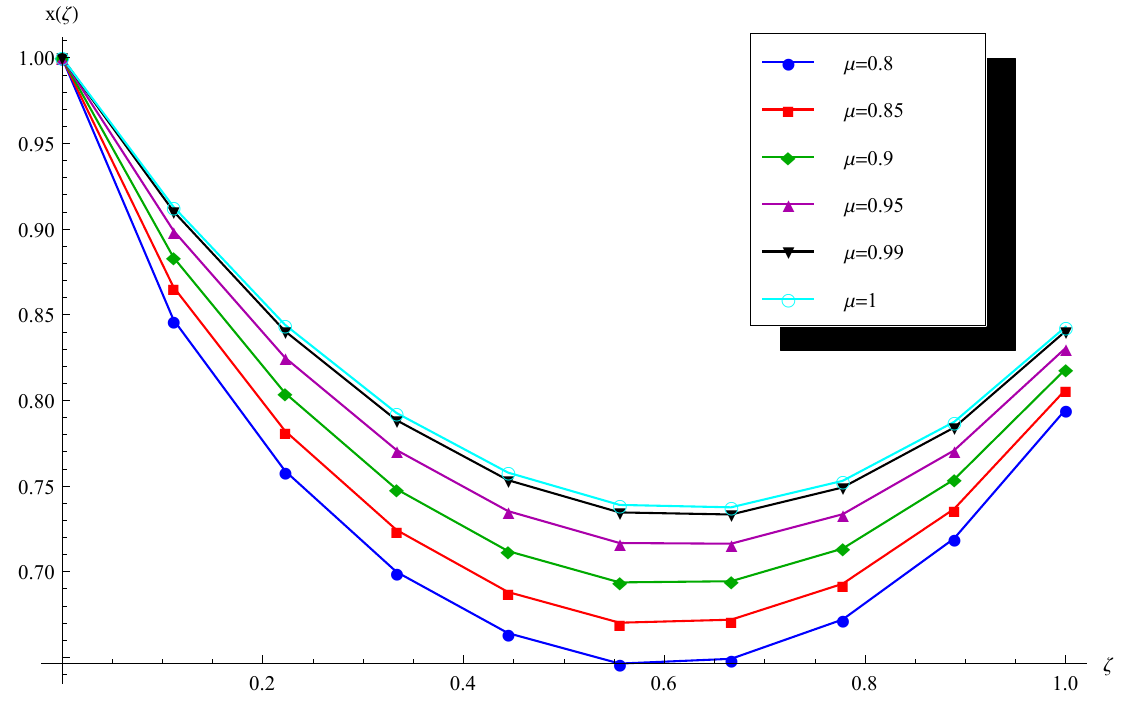}

Fig. 7: Using TW  solutions of $x({\zeta})$ for  $\mu=0.8, 0.85, 0.9, 0.95, 0.99$ and $ 1$.
\\\\

\includegraphics[scale=0.8]{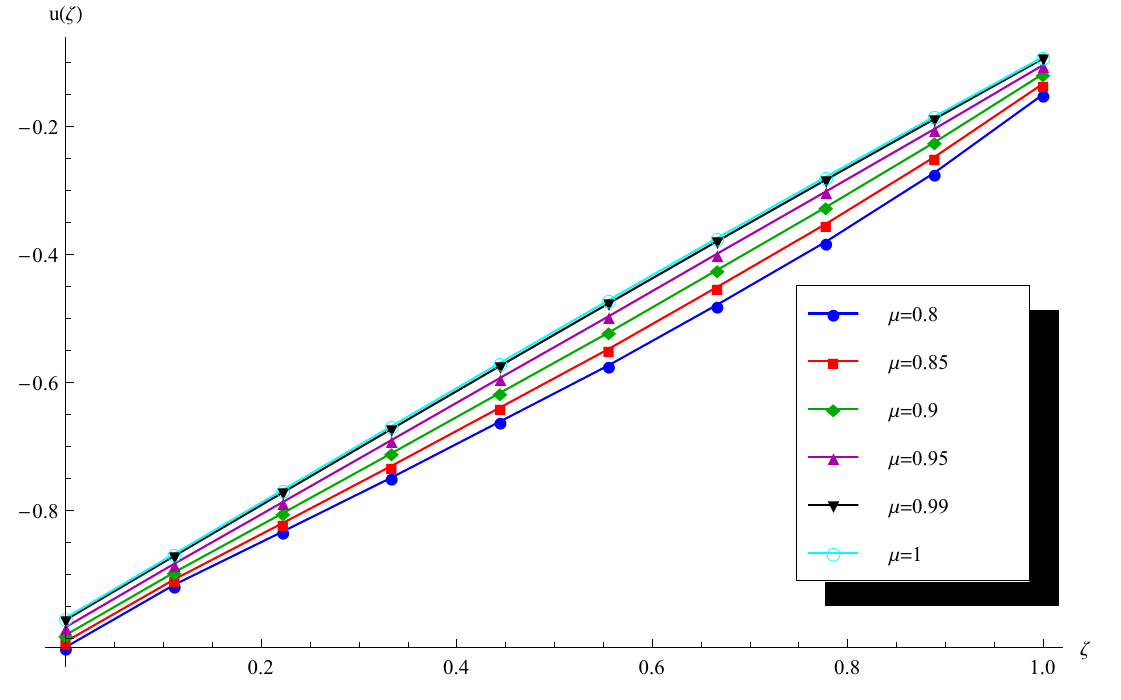}

Fig. 8: Using TW  solutions of $u({\zeta})$ for  $\mu=0.8, 0.85, 0.9, 0.95, 0.99$ and $ 1$.\\

\includegraphics[scale=0.8]{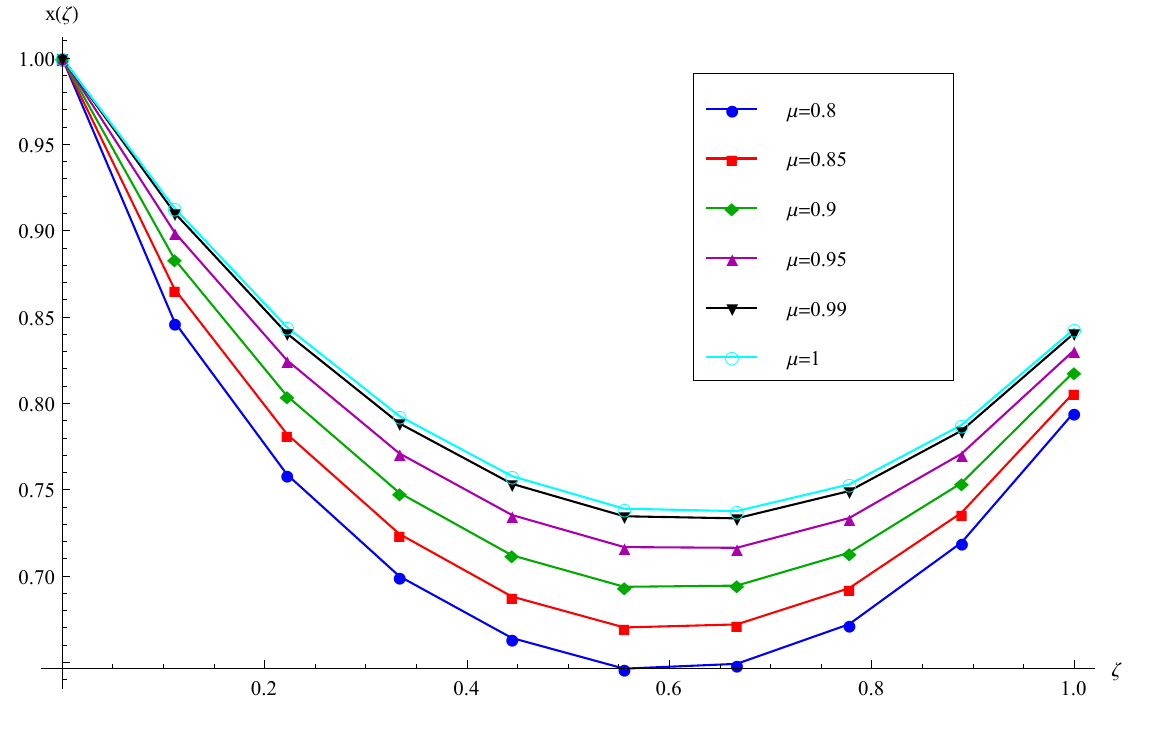}

Fig. 9: Using FTW  solutions of $u({\zeta})$ for $\mu=0.8, 0.85, 0.9, 0.95, 0.99$ and $ 1.$
\\

\includegraphics[scale=0.8]{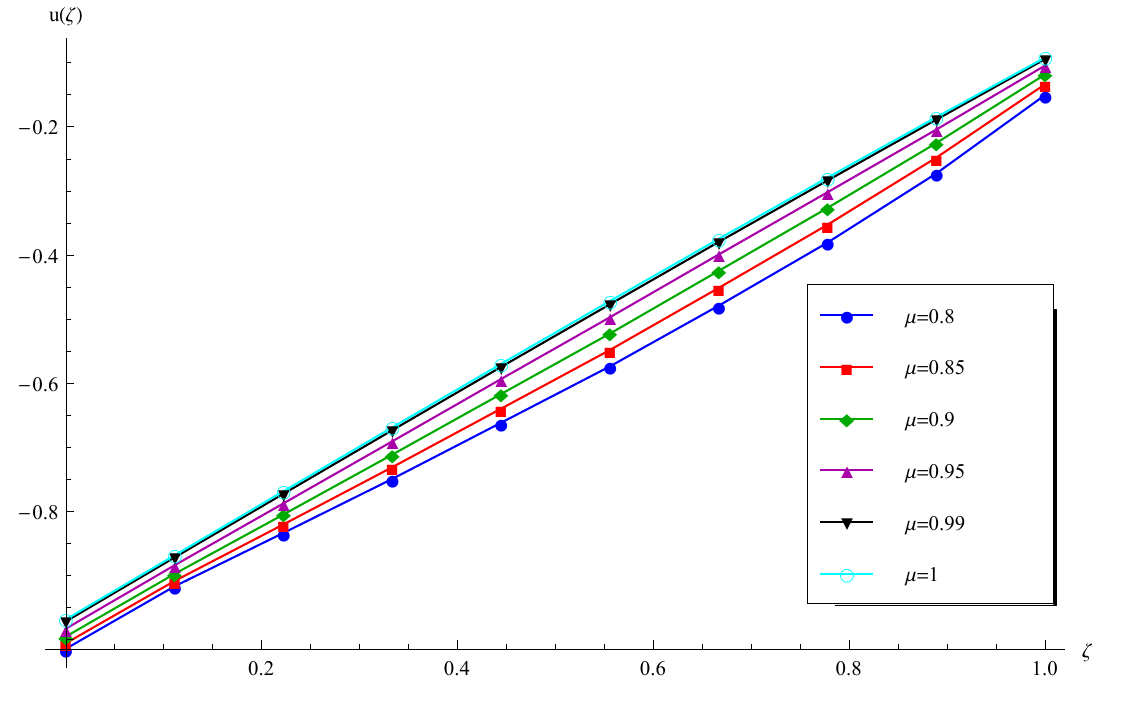}

Fig. 10:Using FTW  solutions of $u({\zeta})$ for $\mu=0.8, 0.85, 0.9, 0.95, 0.99$ and $1.$
\\
\\
\begin{landscape}
\begin{table}
%\begin{sidewaystable}
\centering
\small\addtolength{\tabcolsep}{-1pt}
\caption{Approximate results of state function $x({\zeta})$ for 
$\mu=0.5, 0.75, 0.85, 0.95$ and $ 0.99.$ }
\begin{tabular}{|c|c|c|c|c|c|c|c|c|c|c|c|c|c|c|} \hline
${\zeta}$ &  \multicolumn{2}{|c|}{$\mu=0.5$} &\multicolumn{2}{|c|}{$\mu=0.75$} &\multicolumn{2}{|c|}{$\mu=0.85$}& \multicolumn{2}{|c|}{$\mu=0.95$} &\multicolumn{2}{|c|}{$\mu=0.99$}\\[0.5ex] \cline{2-11}
& TW &  FTW & TW & FTW & TW & FTW & TW & FTW & TW & FTW \\ \cline{2-11}
\hline
0.1  & 0.695033 & 0.706716 &  0.826483 & 0.826205 & 0.866578 & 0.866054 & 0.899198 &  0.898982 & 0.910304 & 0.91026\\
\hline
0.2 &  0.585076 & 0.622558  &  0.729052 & 0.734432 & 0.779992 & 0.782106 & 0.824404 & 0.824876 & 0.840254 & 0.840334\\
\hline
0.3  & 0.534201 & 0.55043 &  0.670491 & 0.675079 & 0.722854 &  0.724269 & 0.770655 & 0.770923 & 0.78828 & 0.788325\\
\hline
0.4  & 0.508309 & 0.535243 &  0.636575 & 0.633937 & 0.687537 &  0.688044 & 0.735357 & 0.735274  & 0.753365 &  0.753341\\
\hline
0.5  & 0.479621 & 0.520989 &  0.616584 & 0.622424 & 0.66812 & 0.670137 & 0.716283 & 0.71666 & 0.734461 & 0.734521\\
\hline
0.6 & 0.497024 &  0.528249 &  0.622844 &  0.62608 & 0.670801 & 0.671888 &  0.716051 & 0.716253  & 0.733272 & 0.733304\\
\hline
0.7  & 0.523824 & 0.567539 &  0.647981 & 0.651046 & 0.691985 & 0.692859 & 0.733308 & 0.733455 & 0.749074 & 0.749097\\
\hline
0.8 & 0.585355 & 0.644608  & 0.698397  & 0.702236 & 0.735577 & 0.73656 & 0.770585 & 0.770731 & 0.784045  &0.784067\\
\hline
0.9   & 0.70695 & 0.762689 &  0.780492  & 0.783741 & 0.805481 & 0.806219 & 0.830413 & 0.830503 & 0.840366 & 0.840378 \\
\hline
\end{tabular}
%\end{sidewaystable}
\label{table:5} 
\end{table}
\begin{table}
%\begin{sidewaystable}
\centering
\small\addtolength{\tabcolsep}{-1pt}
\caption{Approximate results of control function $u({\zeta})$ for $\mu=0.5, 0.75, 0.85, 0.95$ and $ 0.99.$ }
\begin{tabular}{|c|c|c|c|c|c|c|c|c|c|c|c|c|c|c|} \hline
${\zeta}$ &  \multicolumn{2}{|c|}{$\mu=0.5$} &\multicolumn{2}{|c|}{$\mu=0.75$} &\multicolumn{2}{|c|}{$\mu=0.85$}& \multicolumn{2}{|c|}{$\mu=0.95$} &\multicolumn{2}{|c|}{$\mu=0.99$}\\[0.5ex] \cline{2-11}
& TW &  FTW & TW & FTW & TW & FTW & TW & FTW & TW & FTW \\ \cline{2-11}
\hline
0.1 &  -0.903678 &  -0.89907  & -0.92512 & -0.922047 &-0.909681 & -0.908465 & -0.884294 &  -0.883972 & -0.871842 & -0.871776  \\ 
\hline
0.2  & -0.86127  & -0.851525  & -0.847502 & -0.844525 &-0.821461  &-0.820291 & -0.787727 & -0.787566 & -0.772559  &-0.772542 \\
\hline
0.3 & -0.828241  &-0.828557  & -0.770249  &-0.765823 &-0.732867 & -0.731122  &-0.691084 & -0.690741 & -0.673518 & -0.673468 \\
\hline
0.4  &-0.791939  & -0.76738 &  -0.689978 &  -0.694032 & -0.642504 & -0.63987 & -0.59422 & -0.593637 & -0.574914 & -0.57481 \\
\hline
0.5 & -0.756231 & -0.720699  & -0.607542 & -0.599295 &-0.550974 & -0.548373  &-0.497573 & -0.49733 & -0.477135  &-0.477127 \\
\hline
0.6 &  -0.681542 & -0.660433 &  -0.511566 & -0.507927& -0.452795 & -0.451524  &-0.399688 & -0.399469 & -0.37991 & -0.379879 \\
\hline
0.7 & -0.615042 & -0.571855 &  -0.414964 & -0.41042 & -0.35438 & -0.352995 & -0.302644 & -0.30239&  -0.284058 & -0.284017 \\
\hline
0.8 &  -0.520397 & -0.446634 &  -0.30749 & -0.299428 & -0.250747 & -0.248397 & -0.204975 & -0.204612 & -0.189145 & -0.189092 \\
\hline
0.9 & -0.361272 &  -0.279856 &  -0.178901 & -0.169009 & -0.136917 & -0.133879  &-0.105213 & -0.104738 & -0.0947367 & -0.0946691 \\
\hline
\end{tabular}
%\end{sidewaystable}
\label{table:6} 
\end{table}
\end{landscape}

\begin{example}\label{3rd example}
  Consider the following time invariant problem:
    $$\min\Tilde{{J}}=\dfrac{1}{2}\int_{0}^{1}\left(\left(x({\zeta})-{\zeta}^{\mu}\right)^2+\left(u({\zeta})-{\zeta}^{\mu}-\Gamma{(\mu+1)}\right)^2\right)d{\zeta},$$
    subject to the system dynamics
    $$_0^{C}\mathcal{D}_{{\zeta}}^{\mu}x({\zeta})= -x({\zeta})+u({\zeta}),$$
    with initial condition
    $$x(0)=0.$$     
\end{example}

\includegraphics[scale=.85]{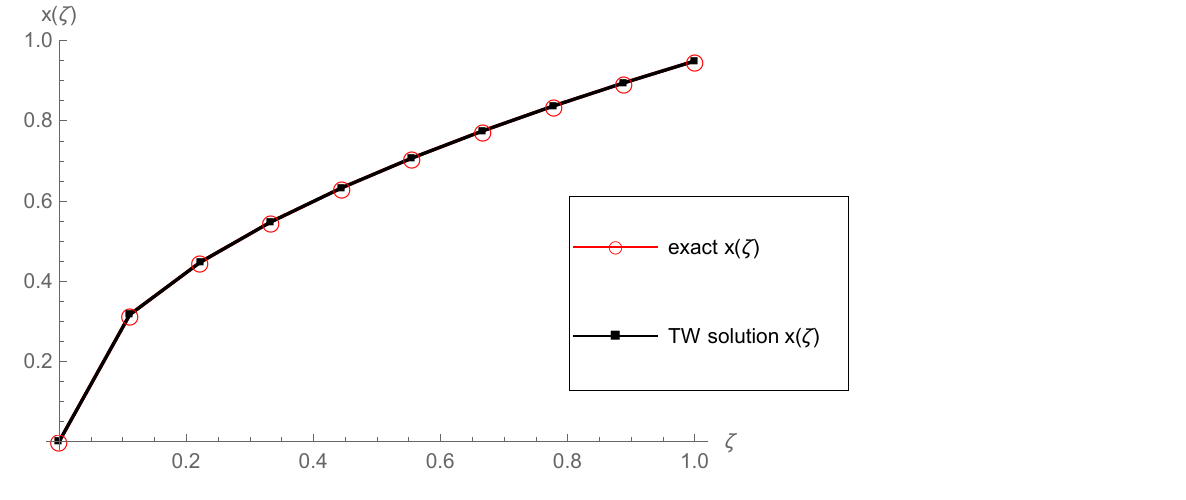}

Fig. 11: Exact and TW solution of state function $x({\zeta})$ for $\mu=0.5.$
\\\\
\includegraphics[scale=.85]{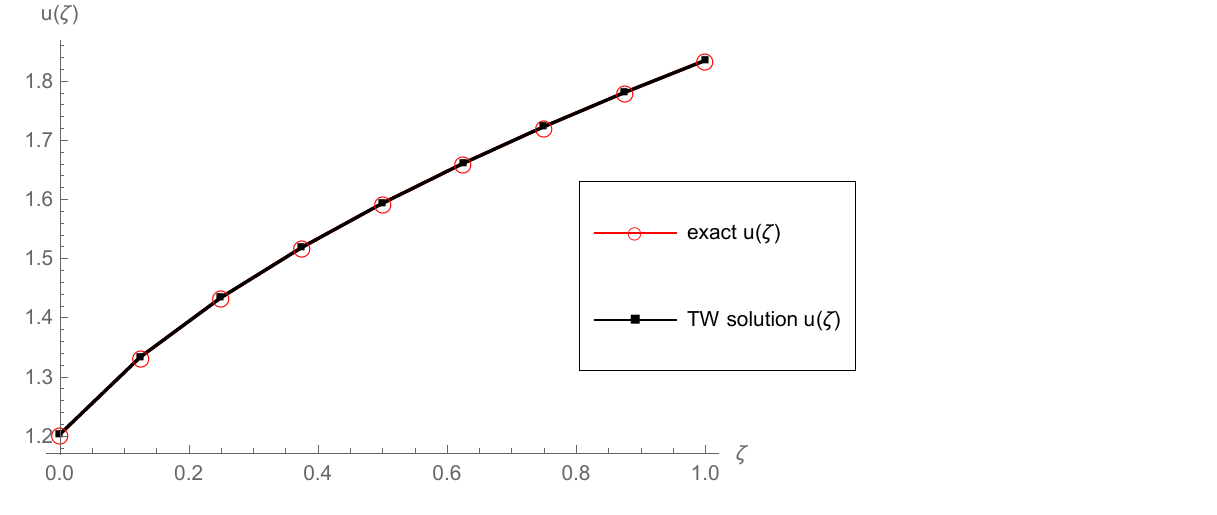}

Fig. 12: Exact and TW solution of control function $u({\zeta})$ for $\mu=0.5.$\\\\

The exact solution of example \ref{3rd example} is $(x({\zeta}), u({\zeta})) = \left({\zeta}^{\mu}, {\zeta}^{\mu}+\Gamma{(\mu +1)}\right)$. The above FOCP has been solved by using fractional Taylor wavelet and Taylor wavelet methods when ${M} = 4$ and ${{k}}=2$ for different values of $\mu$. Figs. 9, 11, and 13 show the approximate and exact solutions of the state function $x({\zeta})$ for $\mu = 0.5, 0.8 $, and $ 0.95$, respectively.  Similarly, Figs. 10, 12, and 14 show the approximate and exact solutions of the control function $u({\zeta})$ for $\mu = 0.5, 0.8,$ and $0.95$,  respectively. Figs. 11 and 12 have been obtained by the Taylor wavelet method and Figs. 13, 14, 15 and 16 have been obtained by the fractional Taylor wavelet method. We can easily observe that the approximate solution obtained by the proposed method is in very good agreement with the exact solution.\\\\

\includegraphics[scale=0.8]{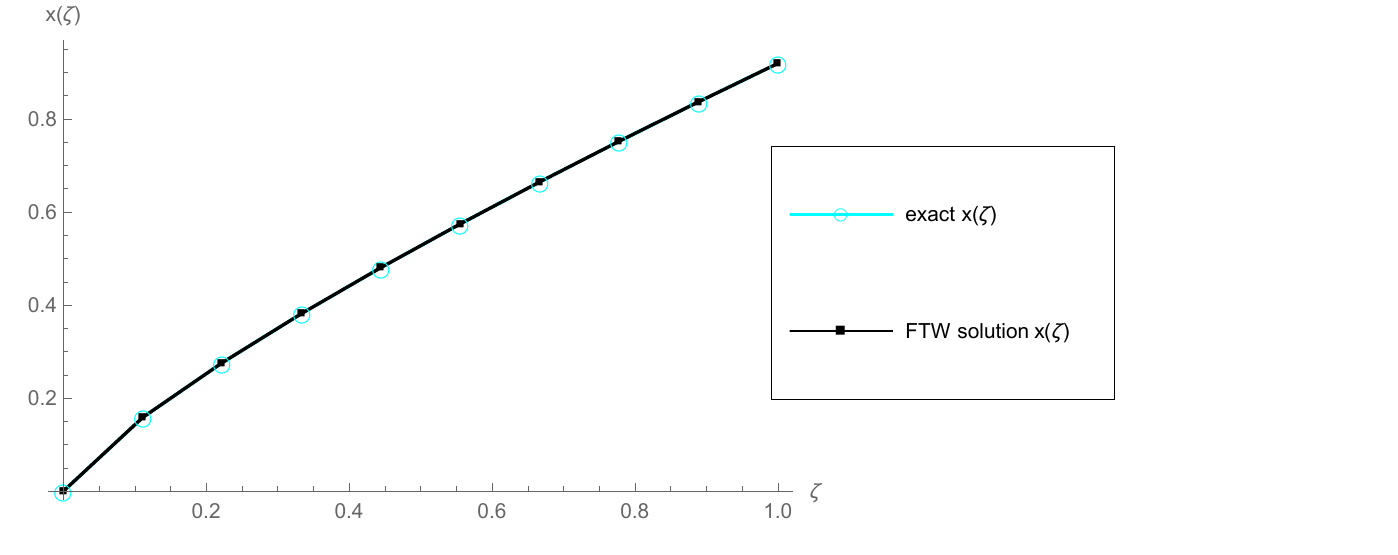}

Fig. 13: Exact and FTW solution of state function $x({\zeta})$ for $\mu=0.8.$
\\\\

\includegraphics[scale=.8]{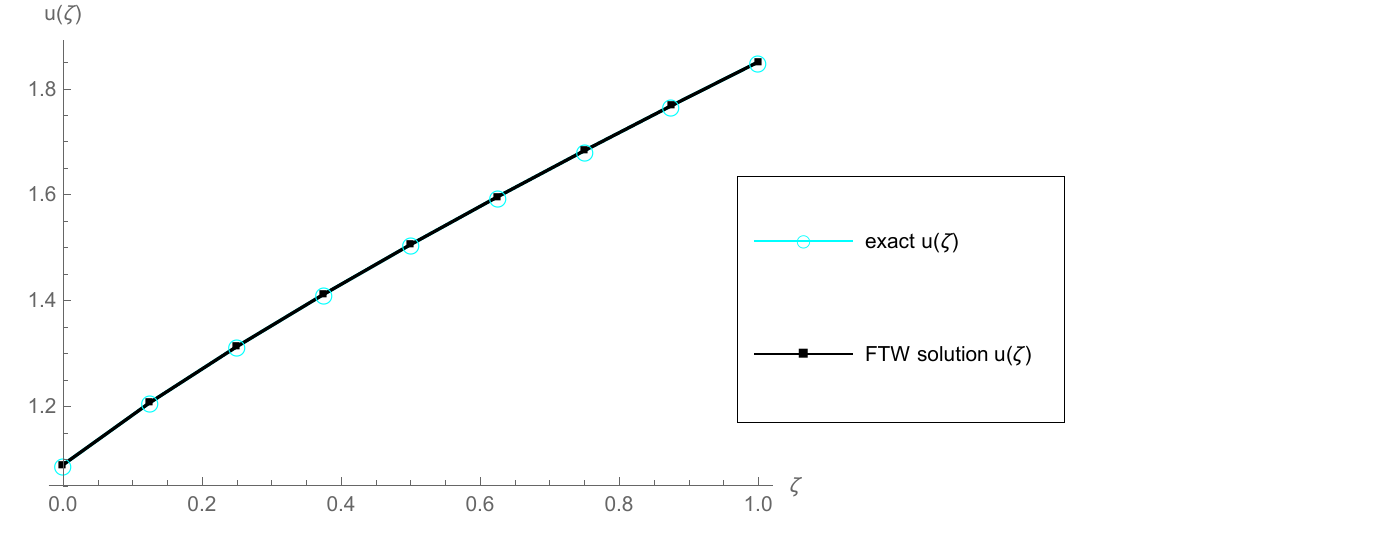}

Fig. 14: Exact and FTW solution of control function $u({\zeta})$ for $\mu=0.8.$\\\\

\includegraphics[scale=0.8]{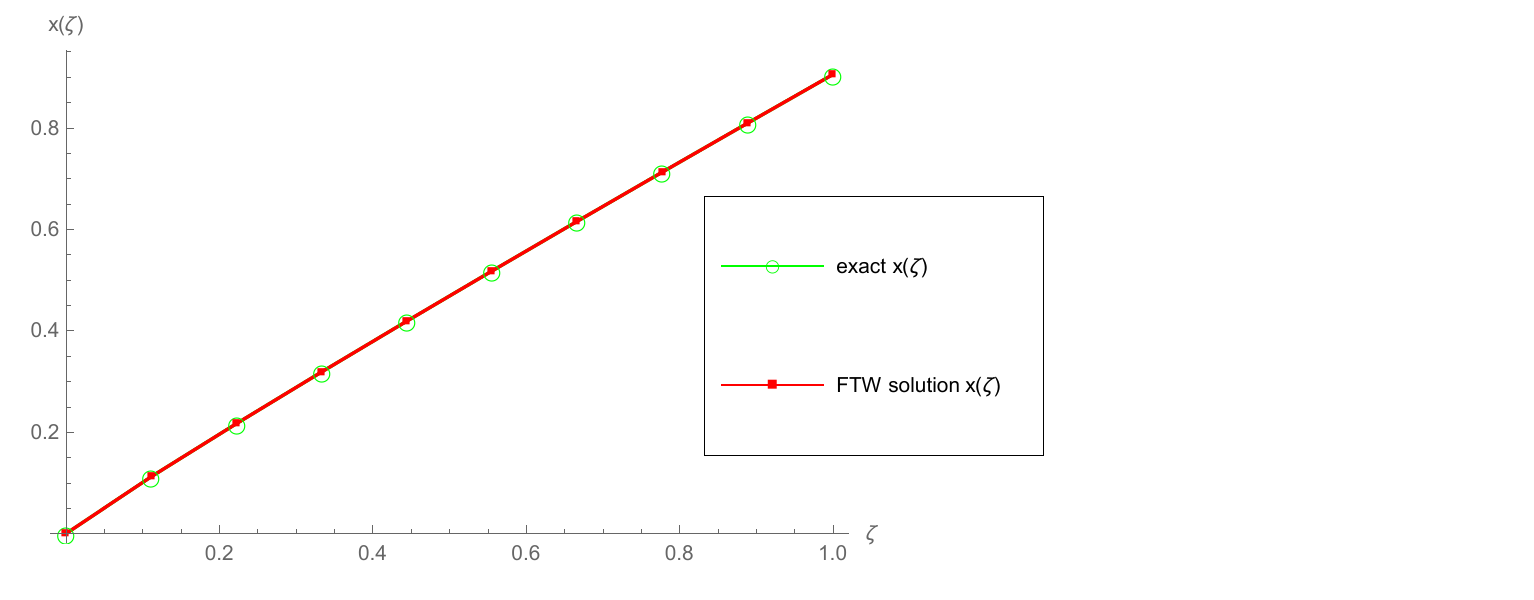}

Fig. 15: Exact and FTW solution of state function $x({\zeta}) $ for $\mu=0.95$.
\\

\includegraphics[scale=.8]{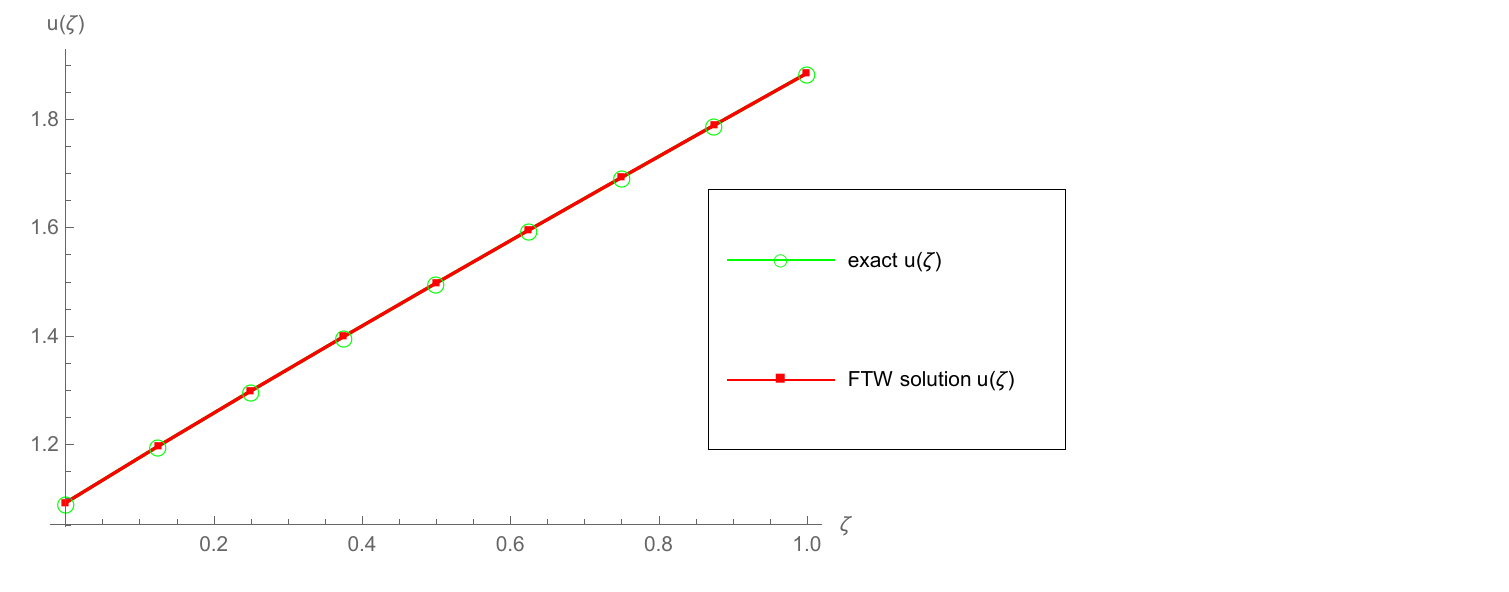}

Fig. 16: Exact and FTW solution of control function $u({\zeta}) $ for $\mu=0.95$. \\

\section{Conclusion}
The purpose of this paper is to describe a procedure for the numerical solution of fractional optimal control problems. In this work, fractional Taylor wavelets and Taylor wavelet methods have been used to solve fractional optimal control problems. As mentioned, the given proposed method transforms the fractional optimal problem into a system of equations that can be solved using the Lagrange multiplier technique. The numerical method based on fractional wavelets is a relatively new area of study. Fractional wavelets are piecewise functions that are also continuous functions with compact support $[0, 1]$.
   The novelty of this paper is the use of a fractional Taylor wavelet to solve fractional optimal control problems using  operational matrices. The implementation of the proposed method is very simple and effective for solving fractional optimal control problems. So, the experimental results from the test examples show that the proposed numerical techniques are accurate with high accuracy. Moreover, error estimation and convergence analysis of the proposed numerical technique have been also established. \\
\\
\textbf{Acknowledgement}
The ``University Grants Commission (UGC)" fellowship scheme, which provided financial support with NTA Ref. No. 201610127052 is gratefully acknowledged by the second author. \\
\textbf{Conflict of interest } The authors declare that they have no conflict of interest. \\

\end{document}